\newcommand{\N}{\mathbb N}
\newcommand{\Z}{\mathbb Z}
\newcommand{\R}{\mathbb R}
\newcommand{\J}{\mathcal{J}}
\newtheorem{theorem}{Theorem}[section]
\newtheorem{definition}[theorem]{Definition}
\newtheorem{lemma}[theorem]{Lemma}
\newtheorem{cor}[theorem]{Corollary}
\newtheorem{prop}[theorem]{Proposition}
\newtheorem*{remark}{Remark}
\newtheorem*{exmp}{Example}
\DeclareMathOperator*{\dist}{dist}
\let\phi=\varphi
\newcommand{\abb}[3]{#1\colon #2\rightarrow #3}
\newcommand{\real}[1]{\mathbb{R}^{#1}}
\begin{document}

\title[Conformally embedded spacetimes]{Conformally embedded spacetimes and the space of null geodesics}

\author{Jakob Hedicke}
\address{Ruhr-Universit\"at Bochum\\ Fakult\"at f\"ur Mathematik\\ Universit\"atsstra\ss e 150\\ 44801 Bochum, Germany}
\email{Jakob.Hedicke@ruhr-uni-bochum.de\\ Stefan.Suhr@ruhr-uni-bochum.de}
\thanks{This research is supported by the SFB/TRR 191 ``Symplectic Structures in Geometry, Algebra and Dynamics'', funded by the Deutsche Forschungsgemeinschaft.}

\author{Stefan Suhr}

\date{\today}

\begin{abstract}
It is shown that the space of null geodesics of a causally simple Lorentzian manifold is Hausdorff if it admits an open conformal embedding into a globally hyperbolic spacetime. This provides an obstruction to conformal embeddings of causally simple spacetimes into 
globally hyperbolic ones irrespective of curvature conditions. Examples of causally simple spacetimes are given not conformally embeddable into globally hyperbolic ones. 
\end{abstract}
\maketitle

\section{Introduction}

Let $(M,g)$ be a spacetime, i.e. a connected time-oriented Lorentzian manifold. The set of null geodesics of $(M,g)$ naturally carries a topology as the 
quotient of the null cones
$\{v\in TM|\; g(v,v)=0,\; v\neq 0\}$ by the actions of the geodesic flow and the Euler vector field. It is shown in \cite{Low90} the space of null geodesics 
$\mathcal{N}_g$ retains a smooth structure from the tangent bundle if the spacetime is strongly causal. In general, though, this 
smooth structure does not induce a manifold structure since the topology might not be Hausdorff. A simple example is given by Minkowski space from 
which one point is deleted. 

Up to this point only two classes of spacetimes were known 
where $\mathcal{N}_g$ is a smooth manifold. On the one hand are the globally hyperbolic spacetimes, for which the space of null geodesics is 
diffeomorphic to the spherical tangent bundle of any Cauchy hypersurface, see \cite{Low}. One the other hand are the {\it Zollfrei} spacetimes, see \cite{guillemin,suhr13}. Zollfrei spacetimes are compact Lorentzian manifolds such that the geodesic flow restricted to the null cones induces an fibration by
circles. The geodesic flow thus projects to a free circle action, which readily implies that the orbit space is a smooth manifold. 

If the space of null geodesics is not Hausdorff, it is shown in \cite{Low90} that the spacetime must admit a {\it naked singularity}, i.e. there exists a PIP 
that contains a TIP (see \cite{FHS} for definitions). In \cite{Low90-2} it is shown that the Hausdorff property of $\mathcal{N}_g$ is equivalent to the {\it null 
pseudoconvexity} of the spacetime. Null pseudoconvexity is a causal condition, which up to this point does not fit into the causal hierarchy, see 
\cite{Sanchez}. In this context it is interesting to determine the precise position in the causal hierarchy.

Motivated by work on the interplay between causal relations in spacetimes and the contact geometry of $\mathcal{N}_g$ Chernov posed in \cite{chernov18} 
two conjectures on causally simple spacetimes and their spaces of null geodesics. More precisely he conjectured that (1) every causally simple spacetime 
admits a conformal embedding into a globally hyperbolic one and (2) if such a conformal embedding exists, the space of null geodesics embeds as an open (contact) 
submanifold. In section \ref{results} below counterexamples to both conjectures are discussed. 

The main purpose of this article though is to give a proof to the weaker formulation of the second conjecture (Theorem \ref{cor1} below) saying that if 
a causally simple spacetime conformally embeds into a globally hyperbolic one, 
the space of null geodesics is Hausdorff, thus showing that in this case the space of null geodesics is a smooth contact manifold. With the richness of examples of such spacetimes one can expect new classes of contact manifolds to 
appear, possibly with exotic contact geometric properties. In the contraposition Theorem \ref{cor1} gives an obstruction to the existence of a conformal 
embedding of a causally simple spacetime into a globally hyperbolic one. The construction in Theorem \ref{T2} provide examples of causally 
simple spacetimes whose space of null geodesics is not Hausdorff and which are therefore not conformally embeddable into a globally hyperbolic spacetime.

\section{Results}\label{results}

Let $(M,g)$ be a spacetime, i.e. a time-oriented Lorentzian manifold. The space of null geodesics $\mathcal{N}_g$ of $(M,g)$ is defined as follows, see 
\cite{Low}: The basic outline is given in the following for the convenience of the reader. The metric $g$ induces a Hamiltonian function 
$$E_g\colon T^*M\to \R,\; \alpha\mapsto g^*(\alpha,\alpha)$$ 
where $g^*$ denotes the dual metric of $g$. The Hamiltonian flow of $E_g$, also called the cogeodesic flow, with respect to the canonical symplectic structure on $T^*M$ is dual to the geodesic flow of $(M,g)$ via the Legendre transform of $g$. Denote with $X_g$ the generator, i.e. the symplectic 
gradient of $E_g$, of the Hamiltonian flow of $E_g$. It is well known that the cogeodesic flow is tangent to the level sets of $E_g$. Thus the {\it future pointing 
dual null cones} 
$$\mathcal{L}^*M:=\{\alpha\in T^*M|\;g^*(\alpha,\alpha)=0,\; \alpha\neq 0, \alpha=g(v,.)\text{ for $v$ future pointing}\}$$
are preserved by the flow. One decisive feature which sets $\mathcal{L}^*M$ apart from the other level sets of $E_g$ is that it is invariant 
under homotheties $\alpha\mapsto t\alpha$ for $t> 0$. The Euler vector field $\xi$ is thus tangent to the dual null cones as well. 

It is easy to see that the commutator of $X_g$ and $\xi$ is co-linear to $X_g$, i.e. by Frobenius' Theorem their span forms an integrable distribution
on the cotangent bundle and by restriction an integrable distribution on $\mathcal{L}^*M$. Denote with $\mathcal{F}_{\text{null}}$ the induced foliation of 
$\mathcal{L}^*M$. By construction a leaf of $\mathcal{F}_{\text{null}}$ consists of the cotangents $g(\dot\gamma(t),.)$ to a null geodesics $\gamma$ and all
its orientation preserving affine reparameterizations. Denote the leaf space of $\mathcal{F}_\text{null}$ with $\mathcal{N}_g$. The leaf space can identified with 
the space of null geodesics that coincide up to affine parametrizations. 
Equip $\mathcal{N}_g$ with the quotient topology relative to $\mathcal{F}_\text{null}$. The quotient topology on $\mathcal{N}_g$ can be characterized via
aa definition of convergence of sequences: One says that the sequence $\{\kappa_n\}_{n\in\N}\subset \mathcal{N}_g$ {\it converges to} $\kappa\in 
\mathcal{N}_g$ if there exist affine parametrizations $\eta_n$ of $\kappa_n$ and $\eta$ of $\kappa$ such that $\dot\eta_n(0)\to \dot\eta(0)$.

If $(M,g)$ is strongly causal every leaf of $\mathcal{F}_\text{null}$
is closed, i.e. the leafs are $2$-dimensional submanifolds of $\mathcal{L}^*M$. In this case the leaf space $\mathcal{N}_g$ inherits a smooth structure from 
$\mathcal{L}^*M$, see \cite[Proposition 11.4.2]{Brickell}. Recall that a smooth structure on a space $\mathcal{M}$ is by definition a maximal atlas of 
homeomorphisms $\phi\colon U^\phi\to V^\phi$, called charts, between open sets $U^\phi\subset \mathcal{M}$ and $V^\phi\subset \R^n$ such that 
every change of chart is a smooth map between open subsets of euclidian space. Thus all notions of calculus are well defined in the case 
of smooth structures as well. 

\begin{prop}\label{propnull}
If the smooth structure of $\mathcal{L}^*M$ descends to $\mathcal{N}_g$, then $\mathcal{N}_g$ inherits a canonical contact structure from the kernel 
of the canonical $1$-form $\theta$ on $T^*M$. 
\end{prop}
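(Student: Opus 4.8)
Write $\omega:=d\theta$ for the canonical symplectic form on $T^*M$. The plan is to realize the claimed contact structure as the descent along $\mathcal{F}_{\text{null}}$ of the corank-one distribution $D:=\ker(\theta|_{\mathcal{L}^*M})\subset T\mathcal{L}^*M$, and then to check the maximal non-integrability condition by a piece of symplectic linear algebra in the fibres of $T^*M$. First I would record the identities making $\theta$ compatible with the two generators of $\mathcal{F}_{\text{null}}$. In Darboux coordinates $\theta=\sum p_i\,dq^i$ and $\xi=\sum p_i\,\partial_{p_i}$, so $\theta(\xi)=0$ and $\mathcal{L}_\xi\theta=\theta$, whence $i_\xi\omega=\theta$; and since $E_g$ is fibrewise homogeneous of degree $2$, $\theta(X_g)=\pm\omega(X_g,\xi)=\pm dE_g(\xi)=\pm 2E_g$ (the sign depends on the convention for the symplectic gradient and is immaterial below). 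On $\mathcal{L}^*M=\{E_g=0\}$ this gives $\theta(X_g)=0=\theta(\xi)$, so both $X_g$ and $\xi$ lie in $D$; thus $D$ contains the $2$-plane field $T\mathcal{F}_{\text{null}}=\langle X_g,\xi\rangle$ tangent to the foliation.

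Second, I would show that $D$ is invariant along the leaves, so that it projects. By Cartan's formula $\mathcal{L}_{X_g}\theta=i_{X_g}\omega+d(\theta(X_g))$ is a constant multiple of $dE_g$, which vanishes on $T\mathcal{L}^*M$; hence the cogeodesic flow preserves $\theta|_{\mathcal{L}^*M}$, in particular $D$. The Euler flow $\psi_t$ satisfies $\psi_t^*\theta=e^t\theta$, which has the same kernel as $\theta$, so it too preserves $D$. Now I invoke the hypothesis: since the smooth structure of $\mathcal{L}^*M$ descends to $\mathcal{N}_g$, the quotient map $\pi\colon\mathcal{L}^*M\to\mathcal{N}_g$ is a smooth submersion with $\ker d\pi=T\mathcal{F}_{\text{null}}\subset D$, and since the leaves are connected and $D$ is invariant along them, $d\pi(D)$ is a well-defined smooth corank-one distribution $\mathcal{C}$ on $\mathcal{N}_g$. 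Equivalently, on a local transversal $\Sigma$ to $\mathcal{F}_{\text{null}}$ inside $\mathcal{L}^*M$ the pull-back $\bar\alpha:=\theta|_\Sigma$ is a local defining form for $\mathcal{C}$. Note that $\theta$ itself does not descend — the Euler flow rescales it — but its kernel does, which is exactly why $\mathcal{C}$ is canonical, depending only on $g$.

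The main point, and the step I expect to be most delicate, is that $\mathcal{C}$ is a contact structure, i.e. that $d\bar\alpha$ is nondegenerate on $\mathcal{C}$; equivalently, that the kernel of $\omega$ restricted to $D$ is exactly $T\mathcal{F}_{\text{null}}$. Here I would argue fibrewise in $(T_\alpha^*M,\omega)$ at a point $\alpha\in\mathcal{L}^*M$: the hyperplane $T_\alpha\mathcal{L}^*M=\ker d_\alpha E_g$ has symplectic orthogonal $\langle X_g\rangle$ (because $\omega(X_g,\cdot)=\pm dE_g$ and $X_g\neq 0$ on the regular level set $\mathcal{L}^*M$), and the hyperplane $\ker\theta$ has symplectic orthogonal $\langle\xi\rangle$ (because $\omega(\xi,\cdot)=\theta$ and $\xi\neq 0$). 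These two hyperplanes are distinct at every point of $\mathcal{L}^*M$ (one has vanishing $dp$-component, the other does not), so $D$ is their transverse intersection and $D^{\perp_\omega}=\langle X_g\rangle+\langle\xi\rangle=T\mathcal{F}_{\text{null}}$, a genuine $2$-plane since $X_g$ has a nonzero $\partial_q$-component while $\xi$ has none. As $T\mathcal{F}_{\text{null}}\subset D$, the kernel of $\omega|_D$ is $D\cap D^{\perp_\omega}=T\mathcal{F}_{\text{null}}$, so $\omega$ induces a symplectic form on $D/T\mathcal{F}_{\text{null}}$; pushing this forward by $d\pi$ is precisely the assertion that $d\bar\alpha$ is nondegenerate on $\mathcal{C}$, so $\mathcal{C}$ is a contact structure on $\mathcal{N}_g$. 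Minor verifications to carry out along the way are that $\mathcal{L}^*M$ is a regular level set of $E_g$ (so $X_g$ is zero-free on it), that $\ker\theta\neq T\mathcal{L}^*M$ everywhere on $\mathcal{L}^*M$ (so $D$ really has corank one in $T\mathcal{L}^*M$ and the dimensions match, $\dim\mathcal{C}=\dim\mathcal{N}_g-1$), and that $\{X_g,\xi\}$ is pointwise linearly independent there.
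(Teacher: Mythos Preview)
Your proof is correct and follows the same overall architecture as the paper's: first show $X_g,\xi\in\ker\theta$ on $\mathcal{L}^*M$, then show leafwise invariance of $\ker\theta$ so that it descends, and finally verify the contact condition. The one substantive difference is in this last step. The paper verifies the contact condition by a direct volume-form computation: completing $\{X_g,\xi\}$ to a basis $\{X_g,\xi,V_1,\ldots,V_{2n-3}\}$ of $T_\alpha\mathcal{L}^*M$ and choosing a suitable $W\in T_\alpha T^*M$, it expands $(d\theta)^n(W,X_g,\xi,V_1,\ldots,V_{2n-3})\neq 0$ and reduces this to the nonvanishing of $\theta\wedge(d\theta)^{n-2}$ on the transversal directions. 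You instead argue via symplectic linear algebra: you compute the $\omega$-orthogonal $D^{\perp_\omega}=\langle X_g\rangle+\langle\xi\rangle=T\mathcal{F}_{\text{null}}$ and conclude that the radical of $\omega|_D$ is exactly the leaf tangent, so $\omega$ descends to a symplectic form on $D/T\mathcal{F}_{\text{null}}\cong\mathcal{C}$. These are equivalent verifications---the paper's wedge-product identity unpacks precisely to your statement about the radical---but your formulation makes the symplectic/contact reduction mechanism more transparent, while the paper's computation is slightly more self-contained for a reader unfamiliar with that picture.
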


\begin{proof}
First note that both $X_g$ and $\xi$ lie in $\ker\theta$ along 
$\mathcal{L}^*M$. Further note that the cogeodesic flow preserves $\theta$ and the flow of the Euler vector field preserves 
$\ker\theta$. Thus the distribution $\ker\theta$ induces a well-defined hyperplane distribution on the quotient of $\mathcal{L}^*M$
by the action of $X_g$ and $\xi$.

Now fix $\alpha\in\mathcal{L}^*M$. Choose tangent vectors $V_1,\ldots,V_{2n-3}\in T\mathcal{L}^*M_\alpha$ such that 
$$\{X_g,\xi,V_1,\ldots,V_{2n-3}\}$$ 
forms a basis of $T\mathcal{L}^*M_\alpha$.
Further choose $W\in TT^*M_\alpha$ with $d\theta(W,\xi)=d\theta(W,V_i)=0$ for all $i=1,\ldots,2n-3$.
Then one has 
\begin{align*}
0\neq &(d\theta)^n(W,X_g,\xi,V_1,\ldots,V_{2n-3})\\
=&d\theta(W,X_g)\cdot (d\theta)^{n-1}(\xi,V_1,\ldots,V_{2n-3})\\
=&-dE_g(W)\cdot \theta\wedge (d\theta)^{n-2}(V_1,\ldots,V_{2n-3}),
\end{align*}
i.e. $\ker\theta\cap T\mathcal{L}^*M$ is a well defined smooth distribution by hyperplanes in $T\mathcal{L}^*M$ which induces a well defined
contact structure on $\mathcal{N}_g$.
\end{proof}

In case the spacetime $(M,g)$ is globally hyperbolic it is well known \cite{Low} that $\mathcal{N}_g$ with the induced contact structure is contactomorphic to 
the unit tangent bundle of any smooth Cauchy hypersurface in $(M,g)$ with its canonical contact structure.

For a spacetime $(\mathcal{M},\mathcal{G})$ denote with $J^+_\mathcal{G}\subset \mathcal{M}\times \mathcal{M}$ the causal relation in 
$(\mathcal{M},\mathcal{G})$. Similarly denote with $I^{+}_\mathcal{G}\subset \mathcal{M}\times \mathcal{M}$ the chronological relation
and the horismo relation of $(\mathcal{M},\mathcal{G})$ with $E^{+}_\mathcal{G}:=J^{+}_\mathcal{G}\setminus I^{+}_\mathcal{G}$, see \cite{Sanchez}.

\begin{prop}\label{prop2}
Let $(M,g)$ be a spacetime such that the space of null geodesics $\mathcal{N}_g$ is Hausdorff. Then the horismos $E_g^+$ is closed.
\end{prop}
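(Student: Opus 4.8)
The plan is to prove the contrapositive via sequences: suppose $E_g^+$ is not closed, so there is a sequence $(p_n,q_n)\in E_g^+$ with $(p_n,q_n)\to(p,q)$ and $(p,q)\notin E_g^+$. Since each $(p_n,q_n)$ lies in the horismos, there is a null geodesic segment $\gamma_n$ from $p_n$ to $q_n$; parametrize it affinely with $\gamma_n(0)=p_n$ and, using a fixed auxiliary Riemannian metric, normalize $|\dot\gamma_n(0)|=1$, so $\gamma_n(a_n)=q_n$ for some $a_n>0$. After passing to a subsequence, $\dot\gamma_n(0)$ converges to a unit future null vector $v\in T_pM$, defining a null geodesic $\gamma$ with $\dot\gamma(0)=v$; equivalently the null geodesics $\kappa_n:=[\gamma_n]\in\mathcal N_g$ converge to $\kappa:=[\gamma]$ in the sense recalled in the excerpt. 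The first thing I would pin down is the behaviour of the parameters $a_n$, and this is where the argument forks.

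If (a subsequence of) the $a_n$ stays bounded, then after a further subsequence $a_n\to a\ge 0$; by continuous dependence of geodesics on initial data, $q_n=\gamma_n(a_n)\to\gamma(a)$, so $q=\gamma(a)$. If $a=0$ then $p=q$ and $(p,q)\in E_g^+$ trivially (or is excluded by convention, depending on the definition adopted in \cite{Sanchez}); if $a>0$ then $\gamma|_{[0,a]}$ is a null geodesic from $p$ to $q$, so $q\in J^+_g(p)$, and one must still exclude $q\in I^+_g(p)$. The latter follows because $I^+_g$ is open and $(p_n,q_n)\notin I^+_g$, so $(p,q)\notin I^+_g$; hence $(p,q)\in E_g^+$, contradicting our assumption. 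So the only surviving case is that $a_n\to\infty$ along every subsequence, i.e. $a_n\to\infty$.

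In the divergent case I want to use the Hausdorff hypothesis together with Proposition \ref{propnull}'s identification of $\mathcal N_g$ as a (smooth, Hausdorff) manifold, or more elementarily the characterization of convergence in $\mathcal N_g$. The idea: the reparametrized geodesics $t\mapsto\gamma_n(a_n+t)$ have initial point $q_n\to q$ and initial velocity $\dot\gamma_n(a_n)$; after passing to a subsequence and renormalizing, $\dot\gamma_n(a_n)/|\dot\gamma_n(a_n)|$ converges to some unit future null $w\in T_qM$, so the "terminal" null geodesics $[\,t\mapsto\gamma_n(a_n+t)\,]$ converge in $\mathcal N_g$ to the null geodesic $\mu$ through $q$ with direction $w$. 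But these are the \emph{same} null geodesics as $\kappa_n$ (just shifted affine parameter and rescaled), so as points of $\mathcal N_g$ we have $\kappa_n\to\kappa$ and $\kappa_n\to\mu$ simultaneously. Hausdorffness of $\mathcal N_g$ forces $\kappa=\mu$, i.e. $\gamma$ and $\mu$ are the same null geodesic up to affine reparametrization; in particular $q$ lies on the (inextendible) null geodesic $\gamma$ through $p$. One then has to upgrade this to $q\in J^+_g(p)$ with $q\notin I^+_g(p)$, i.e. $q=\gamma(b)$ for some $b\ge 0$ — again using that $(p,q)\notin I^+_g$ (which is open) and the causal relation between points on a common null geodesic — yielding $(p,q)\in E_g^+$ and the desired contradiction.

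The main obstacle is the divergent case, and specifically making rigorous the claim that $\kappa_n\to\kappa$ \emph{and} $\kappa_n\to\mu$: one must ensure the renormalizing factors $|\dot\gamma_n(a_n)|$ behave well (they are controlled along a fixed compact arc once $q_n\to q$, by smooth dependence on initial conditions, but one should check they do not degenerate), and one must argue that $\gamma$ and $\mu$ are genuinely distinct \emph{unless} $q$ already lies on $\gamma$ at a nonnegative parameter — handling the possibility that $q$ lies on $\gamma$ only at a negative parameter, or that $\gamma$ is incomplete so that "$\gamma(a)$" is undefined for the relevant $a$ (here incompleteness of $\gamma$ is exactly what should \emph{force} the two limit leaves to be distinct, producing the non-Hausdorff behaviour, so the contradiction is with the hypothesis). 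Care with the precise conventions for $E_g^+$ (whether the diagonal is included, whether "horismos" requires an achronal null geodesic) from \cite{Sanchez} will also be needed to tie off the boundary subcases cleanly.
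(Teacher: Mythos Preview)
Your argument is essentially the paper's: normalize the velocities of $\gamma_n$ at both endpoints, extract subsequential limits $v\in T_pM$ and $w\in T_qM$, observe that $[\gamma_n]\to[\gamma_v]$ and $[\gamma_n]\to[\gamma_w]$ in $\mathcal N_g$, invoke Hausdorffness to get $[\gamma_v]=[\gamma_w]$ so that $q$ lies on $\gamma_v$, and finish with the openness of $I_g^+$. The paper does this in one stroke without your bounded/unbounded case split---normalizing at \emph{both} ends from the outset makes the bounded case redundant and sidesteps the incompleteness worry you flag in your last paragraph.
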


\begin{remark}
The fact that $E_g^+(p):=E^+_g\cap \{p\}\times M$ and $E^-_g(p):=E^+_g\cap M\times \{p\}$ are closed for every $p\in M$ does not in general imply that $E^+_g$ is closed as a subset of $M\times M$.
Consider for example the Minkowski space $\mathbb{L}^2$ with a point removed.
Then $E^{\pm}_g(p)$ is closed for every $p$, but there exist sequences $(p_n,q_n)\in E^+_g$ with $(p_n,q_n)\rightarrow (p,q)\in M\times M$ and $(p,q)\notin E^+_g$.
\end{remark}

\begin{proof}[Proof of Proposition \ref{prop2}]
Assume that $\mathcal{N}_g$ is Hausdorff. Let 
$$\{(p_n,q_n)\}_{n\in \N}\subset E_g^+$$ 
be a convergent sequence with limit $(p,q)\in M\times M$. By definition there exists a sequence 
$$\{\gamma_n\colon [0,T_n]\to M\}_{n\in\N}$$ 
of null geodesics connecting $p_n$ with $q_n$. Up to passing to a subsequence one can assume that $\dot\gamma_n(0)$ and $\dot\gamma_n(T_n)$
normalized with respect to a Riemannian metric converge to null vectors $v\in TM_p$ and $w\in TM_q$, respectively. That is equivalent to saying that the 
sequence $\{[\gamma_n]\}_{n\in\N}$ 
converges in $\mathcal{N}_g$ to classes represented by $\gamma_{v}$ and $\gamma_{w}$, where $\dot\gamma_{v}(0):=v$ and $\dot\gamma_w(0):=w$ 
define the geodesics. Since $\mathcal{N}_g$ 
is Hausdorff one concludes $[\gamma_{v}]=[\gamma_{w}]$. Thus the point $q$ lies on $\gamma_{v}$, which shows $(p,q)\in J^+_g$. 
It is obvious that $(p,q)\notin I^+_g$ since otherwise it follows $(p_n,q_n)\in I^+_g$ for sufficiently large $n$ which contradicts the 
assumption $(p_n,q_n)\in E^+_g=J^+_g\setminus I^+_g$. Therefore one has $(p,q)\in E^+_g$. 
\end{proof}

\begin{definition}[\cite{Sanchez}]
A spacetime $(M,g)$ is {\it causally simple} if it is causal and $J^+_g$ is closed.
\end{definition}

\begin{prop}\label{prop1}
Let $(M,g)$ be a simply connected two dimensional spacetime. Then $\mathcal{N}_g$ is a smooth manifold if and only if $(M,g)$ is causally simple. 
\end{prop}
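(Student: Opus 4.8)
The plan is to reduce everything to the geometry of the two null foliations of the surface. Since $\dim M=2$, at each point the null cone consists of two lines; simple connectedness makes $M$ orientable, and together with the time orientation this singles out two nowhere-vanishing future-directed null line fields on $M$, whose (one-dimensional, hence involutive) spans integrate to two transverse foliations $\mathcal{F}_1,\mathcal{F}_2$ by curves. In two dimensions the orthogonal complement of a null line is that line itself, so $g(\nabla_XX,X)=\tfrac12 X(g(X,X))=0$ forces $\nabla_XX\parallel X$: the leaves of $\mathcal{F}_1$ and of $\mathcal{F}_2$ are exactly the inextendible null (pre)geodesics up to affine reparametrization. Moreover a convergent sequence in $\mathcal{N}_g$ forces its terms into a single family for large index, since the null direction at a nearby point varies continuously and the two families are disjoint. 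Hence, topologically, $\mathcal{N}_g$ is the disjoint union of the (one-dimensional) leaf spaces $L_1$ and $L_2$ of $\mathcal{F}_1$ and $\mathcal{F}_2$, and $\mathcal{N}_g$ is a smooth manifold if and only if each $L_i$ is a Hausdorff one-manifold, equivalently: every leaf of $\mathcal{F}_i$ is a closed subset of $M$ (equivalently, the corresponding leaf of $\mathcal{F}_\mathrm{null}$ is closed in $\mathcal{L}^*M$) and no two distinct leaves of the same family are topologically non-separated. Note also that $M$, being a simply connected surface carrying a Lorentzian metric, is diffeomorphic to $\R^2$.

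For "causally simple $\Rightarrow\mathcal{N}_g$ a manifold" I would first use the causal ladder: causal simplicity implies strong causality, so by the fact quoted in the text the leaves of $\mathcal{F}_\mathrm{null}$ are closed and the smooth structure descends; being causal there are no closed causal curves, so no leaf is a circle and each leaf is a properly embedded line. Suppose now $\gamma_1\neq\gamma_2$ are non-separated leaves of, say, $\mathcal{F}_1$; they are disjoint (two null geodesics of one family through a common point coincide), and there is a sequence of leaves $\eta_n$ converging to both. Using the foliated product neighbourhoods of the simply connected leaves $\gamma_i$ one produces, for every $a\in\gamma_1$ and $b\in\gamma_2$, points $a_n,b_n\in\eta_n$ with $a_n\to a$, $b_n\to b$; the order of $a_n,b_n$ along the future-directed null geodesic $\eta_n$ puts one of $(a,b),(b,a)$ into $\overline{J^+_g}=J^+_g$. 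A connectedness argument (for fixed $b$ the sets $\gamma_1\cap J^-_g(b)$ and $\gamma_1\cap J^+_g(b)$ are closed, disjoint by causality, and cover $\gamma_1\cong\R$; then iterate over $b\in\gamma_2\cong\R$) upgrades this to $\gamma_1\times\gamma_2\subseteq J^+_g$, so the complete family-$1$ null geodesic $\gamma_2$ lies in $J^+_g(a_0)$ for a single $a_0\in\gamma_1$. Following $\gamma_2$ into its past, properness of the embedding forces it to leave every compact set while staying in the closed region $J^+_g(a_0)$, so its past end can only accumulate at $a_0$ or along one of the two null geodesics through $a_0$; each of these contradicts either properness, or that leaves of $\mathcal{F}_\mathrm{null}$ are closed (no leaf accumulates on a different one), or strong causality. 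This contradiction shows $\mathcal{N}_g$ is Hausdorff.

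For the converse, assume $\mathcal{N}_g$ is a smooth manifold, so each $L_i$ is a Hausdorff one-manifold, all leaves are closed, and, $\mathcal{N}_g$ being Hausdorff, $(M,g)$ is null pseudoconvex by \cite{Low90-2} and $E^+_g$ is closed by Proposition \ref{prop2}. If $(M,g)$ were not causal it would contain a closed null geodesic $\gamma_0$, a circle leaf of (say) $\mathcal{F}_1$; since $M\cong\R^2$, $\gamma_0$ bounds a disk $\Delta$, and $\mathcal{F}_1|_\Delta$ is a nonsingular foliation of the disk with $\gamma_0$ a boundary leaf. Orienting the line field on the simply connected $\Delta$ and applying the Poincaré--Bendixson theorem, either some interior leaf spirals onto a compact leaf $C\neq\gamma_0$ (then $C$ lies in the closure of that leaf and $L_1$ is not even $T_1$), or every interior leaf is a circle, which has trivial holonomy, so $\Delta$ fibres over an interval with circle fibres, contradicting $\chi(\Delta)=1$. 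Either way $\mathcal{N}_g$ is not a manifold, so $(M,g)$ is causal. It then remains to deduce that $J^+_g$ is closed from causality, null pseudoconvexity and closedness of $E^+_g$ in this two-dimensional, simply connected setting: the point is to run a limit-curve argument on the curves connecting a sequence $(p_n,q_n)\in J^+_g$ converging to $(p,q)$, where null pseudoconvexity keeps the relevant causal curves from escaping to infinity, so that a limiting causal curve from $p$ exists and, together with closedness of $E^+_g$, shows $(p,q)\in J^+_g$. Hence $(M,g)$ is causally simple.

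The main obstacle is the causal dictionary used in both directions: turning a purely topological non-separation of two parallel null geodesics into a genuine failure of closedness of the relation $J^+_g$ on $M\times M$ — as opposed to the slice conditions $E^\pm_g(p)$, cf.\ the Remark after Proposition \ref{prop2} — and, conversely, recovering closedness of $J^+_g$ from the manifold property. Concretely I expect the hardest points to be the final accumulation-and-properness contradiction in the "$\Rightarrow$ Hausdorff" step, and the limit-curve/pseudoconvexity input needed to pass from $E^+_g$ closed to $J^+_g$ closed in the presence of the $I^+_g$ versus $E^+_g$ gap; by contrast the reduction to the two null foliations and the Poincaré--Bendixson argument for causality should be comparatively routine.
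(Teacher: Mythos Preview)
Your reduction to the two null foliations $\mathcal{F}_1,\mathcal{F}_2$ is exactly how the paper begins, but from there the arguments diverge and yours carry real gaps in both directions.

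\textbf{Forward direction.} Your contradiction via ``$\gamma_2\subset J^+_g(a_0)$, follow $\gamma_2$ into its past, it must accumulate on $a_0$ or on a null geodesic through $a_0$'' is not justified: a properly embedded line can run off to infinity inside the closed set $J^+_g(a_0)$ without accumulating on anything, since $\partial J^+_g(a_0)=E^+_g(a_0)$ is itself noncompact. The paper bypasses this entirely with one observation you did not use: in a simply connected $2$-dimensional spacetime a null geodesic has no conjugate points and no cut points, so a null geodesic is, up to parametrization, the \emph{unique} causal curve between any two of its points. Then if $\dot\eta_n(s)\to\dot\eta^1(s)$ and $\dot\eta_n(t)\to\dot\eta^2(t)$ with $s<t$, causal simplicity gives $(\eta^1(s),\eta^2(t))\in J^+_g$, and this uniqueness forces the limiting null geodesic through $\eta^1(s)$ to reach $\eta^2(t)$, i.e.\ $\eta^1\equiv\eta^2$. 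No accumulation or properness argument is needed.

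\textbf{Converse direction.} Here the gap is more serious. Your plan is to combine causality, closedness of $E^+_g$, and null pseudoconvexity into a limit-curve argument for closedness of $J^+_g$; you yourself flag the $I^+_g$ versus $E^+_g$ issue, and indeed this step is not carried out. The paper avoids limit curves altogether. Since each leaf of $\mathcal{F}_\text{null}$ is connected and $M$ is simply connected, $\mathcal{N}_g$ is simply connected, hence diffeomorphic to a disjoint union of two open intervals $I_1\sqcup I_2$. The quotient maps $f_i\colon M\to I_i$ are smooth submersions which (after orienting) satisfy $df_i(v)\ge 0$ for every future-pointing $v$, and conversely $v$ is future-pointing iff $df_1(v)\ge 0$ and $df_2(v)\ge 0$. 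The heart of the proof is the lemma
\[
(p,q)\in J^+_g \iff f_1(q)\ge f_1(p)\text{ and }f_2(q)\ge f_2(p),
\]
proved by a constructive curve-surgery: starting from any path $p\to q$, one repeatedly replaces subarcs by null geodesic segments (using connectedness of the leaves) and smooths so that first $f_1$ and then $f_2$ become monotone, producing a future-pointing curve. The right-hand side is a closed condition, so $J^+_g$ is closed immediately; causality is automatic since the $f_i$ are strictly increasing along nonconstant causal curves. Your Poincar\'e--Bendixson argument for causality is thus unnecessary, and the pseudoconvexity/limit-curve step is replaced by an explicit characterisation of $J^+_g$ via the two ``null coordinates'' $f_1,f_2$.
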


The conformal class of $g$ is defined as 
$$[g]:=\{e^f g|f\in\mathcal{C}^{\infty}(M,\real{})\}.$$
Let $(M,g)$ and $(N,h)$ be smooth Lorentzian manifolds of the same dimension $m\geq 
2$.
Assume that $(M,g)$ embeds conformally as an open subset into $N$, i.e. there exists an open embedding $i\colon M\hookrightarrow N$
such that $i^{\ast}h=e^f g$ for some function $\abb{f}{M}{\real{}}$.

\begin{theorem}\label{cor1}
Let $M,N$ be smooth manifolds of the same dimension. Assume that $(N,h)$ is globally hyperbolic and $(M,g)$ embeds conformally into $(N,h)$.
If $(M,g)$ is causally simple, the space of null geodesics $\mathcal{N}_g$ is a smooth contact manifold.
\end{theorem}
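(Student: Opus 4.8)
The plan is to reduce the statement to the Hausdorff property of $\mathcal{N}_g$, and then to play the conformal embedding of $(M,g)$ against the fact that a globally hyperbolic spacetime has Hausdorff space of null geodesics. First I would record that (unparametrized) null geodesics are a conformal invariant, so $\mathcal{N}_g$ depends only on $[g]$, and that the embedding $i\colon M\hookrightarrow N$ carries each maximal $g$-null geodesic to an open subarc of a maximal $h$-null geodesic; dually $i$ induces an \emph{open} embedding $\mathcal{L}^*M\hookrightarrow\mathcal{L}^*N$ which sends null covectors to null covectors and carries $\mathcal{F}_{\text{null}}$ of $M$ onto the restriction to $i(M)$ of $\mathcal{F}_{\text{null}}$ of $N$ (replacing $i$ by a time-orientation preserving embedding if necessary, which is possible since $M$ is connected). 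Passing to leaf spaces this yields a continuous map $\Phi\colon\mathcal{N}_g\to\mathcal{N}_h$, where for a maximal $g$-null geodesic $\gamma$ the image $\Phi([\gamma])$ is the maximal $h$-null geodesic $\zeta$ extending $i\circ\gamma$, and $i\circ\gamma$ corresponds to a connected component of $\zeta^{-1}(i(M))$. Since a causally simple spacetime is strongly causal, the discussion preceding Proposition \ref{propnull} together with Proposition \ref{propnull} itself show that $\mathcal{N}_g$ carries the smooth structure descending from $\mathcal{L}^*M$ and the induced contact structure; as $\mathcal{N}_g$ is also second countable (the quotient map from $\mathcal{L}^*M$ is open), it only remains to prove that $\mathcal{N}_g$ is Hausdorff.

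Assume it is not. Then there are $\kappa_1\neq\kappa_2$ in $\mathcal{N}_g$ and a sequence $\kappa_n$ converging to both. Since $(N,h)$ is globally hyperbolic, $\mathcal{N}_h$ is Hausdorff (it is contactomorphic to the unit tangent bundle of a Cauchy hypersurface), so continuity of $\Phi$ forces $\Phi(\kappa_1)=\Phi(\kappa_2)=:[\zeta]$. Choosing representatives $\gamma_1$ of $\kappa_1$ and $\gamma_2$ of $\kappa_2$, the curves $i\circ\gamma_1$ and $i\circ\gamma_2$ are two distinct connected components of $\zeta^{-1}(i(M))$; orienting $\zeta$ by the time orientation and assuming $i\circ\gamma_1$ precedes $i\circ\gamma_2$, the right endpoint $b$ of the parameter interval of $\gamma_1$ is finite and interior to the domain of $\zeta$, and $z:=\zeta(b)\in\partial i(M)$ lies between the two geodesics.

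Now I would use causal simplicity. Unwinding the convergence $\kappa_n\to\kappa_1$ near an arbitrary point $p\in\gamma_1$ and $\kappa_n\to\kappa_2$ near an arbitrary point $q\in\gamma_2$, for large $n$ the geodesic $\gamma_n$ passes through a point near $p$ and, farther to the future along $\gamma_n$, through a point near $q$; hence those points lie in $J^+_g$, and since $J^+_g$ is closed, $(p,q)\in J^+_g$. This holds for every such pair. Moreover the broken curve obtained by concatenating a piece of $\gamma_1$ ending at $p$, a causal curve from $p$ to $q$, and a piece of $\gamma_2$ starting at $q$ cannot be an unbroken null pregeodesic, since otherwise $\gamma_1$ and $\gamma_2$ would both be maximal extensions of the same null geodesic and hence equal; therefore its endpoints are chronologically related, and letting the endpoints run over $\gamma_1$ and $\gamma_2$ we conclude that $(p,q)\in I^+_g$ for \emph{every} $p\in\gamma_1$ and $q\in\gamma_2$. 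Transported by $i$ this says $(\zeta(s),\zeta(t))\in I^+_h$ for all $s$ in the parameter interval of $\gamma_1$ and all $t$ in that of $\gamma_2$.

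But $\zeta|_{[s,t]}$ is an $h$-null geodesic from $\zeta(s)$ to $\zeta(t)$, and a null geodesic cannot reach a point of $I^+_h$ of its starting point before its first null cut point; since $(N,h)$ is strongly causal this cut point is strictly in the future, so when $s$ and $t$ can be chosen close enough to $b$ — e.g. whenever the ``gap'' $\zeta^{-1}(N\setminus i(M))$ around $b$ is the single point $b$, since then a short segment of $\zeta$ about $z$ is achronal — we obtain a contradiction with $(\zeta(s),\zeta(t))\in I^+_h$. The main obstacle is the remaining configuration, in which $\zeta$ leaves $i(M)$ along a whole subarc of positive length contained in $\partial i(M)$ (this genuinely occurs, for instance for suitable open subsets of Minkowski space, which then fail to be causally simple): here one must exclude that $\zeta$ has already passed its null cut point inside that subarc, combining the closedness of $J^+_g$ with global hyperbolicity of $N$ and a limit curve argument for the approximating geodesics. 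Controlling the null cut locus relative to the components of $\zeta^{-1}(i(M))$ is the delicate point, and is exactly where the hypotheses on $M$ and on $N$ must be used together.
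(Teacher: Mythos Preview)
Your reduction to the Hausdorff property, the continuous map $\Phi\colon\mathcal{N}_g\to\mathcal{N}_h$, and the conclusion that two putative limits $\gamma_1,\gamma_2$ sit as distinct components of $\zeta^{-1}(i(M))$ for a single $h$-null geodesic $\zeta$ all agree with the paper's proof of Proposition~\ref{T1}. Your derivation that $(p,q)\in I^+_g$ for every $p\in\gamma_1$, $q\in\gamma_2$ is also correct. But the proof is not complete: you yourself single out the case where $\zeta$ leaves $i(M)$ along a subarc of positive length as ``the main obstacle'' and offer only the sentence that one ``must exclude that $\zeta$ has already passed its null cut point inside that subarc, combining the closedness of $J^+_g$ with global hyperbolicity of $N$ and a limit curve argument.'' That sentence is a description of the difficulty, not a resolution of it, and this case is the entire content of the theorem.

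Moreover, the direction you indicate does not lead anywhere. The relation $(\zeta(s),\zeta(t))\in I^+_h$ you established already says that the null cut point of $\zeta(s)$ along $\zeta$ lies strictly before $t$; nothing in the hypotheses forces it to lie outside the gap, and limit-curve arguments applied to $i\circ\gamma_n$ simply reproduce $\zeta$. The paper does \emph{not} argue via the cut locus of $\zeta$. It proves directly that $\zeta^{-1}(i(M))$ is connected (Lemma~\ref{L4}), after several preparatory lemmas (Lemmas~\ref{L1}, \ref{L41}, \ref{L2}) describing how $E^{\pm}_h$ meets $M$ near a boundary exit point. The mechanism is an index-form argument: from the approximating sequence one builds auxiliary null geodesics $\gamma_k\subset M$ with endpoints in $E^+_g$, so their index forms are negative semidefinite; passing to the limit yields a null geodesic through the boundary point with negative definite index form on proper subsegments, and this is used (via a local diffeomorphism of $\mathrm{Exp}$) to trap that boundary point in $M$. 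None of this machinery appears in your outline.
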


\begin{remark}
If $(M,g)$ conformally embeds into $(N,h)$ the canonical map $\mathcal{N}_g\hookrightarrow \mathcal{N}_h$ is an immersion, provided both spaces have
a smooth structure. Taking this observation into account, Theorem \ref{cor1} confirms a weaker version of \cite[Conjecture 3.7]{chernov18}. 
\end{remark}

\begin{proof}[Proof of Theorem \ref{cor1}]
According to \cite{Low90} the space $\mathcal{N}_g$ inherits a smooth structure if $(M,g)$ is strongly causal.  Any causally simple spacetime is strongly causal, 
see \cite{Sanchez}. The canonical $1$-form on $T^*M$ induces a contact structure on $\mathcal{N}_g$ by Proposition \ref{propnull}. The topology of 
$\mathcal{N}_g$ is Hausdorff by the next proposition. 
\end{proof}

\begin{prop}\label{T1}
Let $M,N$ be smooth manifolds of the same dimension. Assume that $(N,h)$ is globally hyperbolic and $(M,g)$ embeds conformally into $(N,h)$.
If $(M,g)$ is causally simple the space of null geodesics $\mathcal{N}_g$ is Hausdorff.
\end{prop}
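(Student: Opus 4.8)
The plan is to argue by contradiction: suppose $\mathcal N_g$ is not Hausdorff, so there are two distinct null geodesics $[\gamma_1]\neq[\gamma_2]$ in $\mathcal N_g$ and a sequence $[\sigma_k]\to[\gamma_1]$, $[\sigma_k]\to[\gamma_2]$. Concretely, pick affine parametrizations so that $\dot\sigma_k(0)\to v_1$ with $\gamma_1=\gamma_{v_1}$, and (after reparametrizing) $\dot\sigma_k(t_k)\to v_2$ with $\gamma_2=\gamma_{v_2}$, where $v_1\in T_pM$, $v_2\in T_qM$, and crucially $p\neq q$ or the geodesics leave each other — in any case $[\gamma_1]\neq[\gamma_2]$. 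The goal is to transport this non-Hausdorff configuration into $(N,h)$ via the conformal embedding $i$, use that $(N,h)$ is globally hyperbolic (hence $\mathcal N_h$ is Hausdorff, being the unit tangent bundle of a Cauchy hypersurface), and derive a contradiction with causal simplicity of $(M,g)$.

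**Key steps.**

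First I would recall that a conformal change of metric does not change the unparametrized null geodesics, only their affine parametrization; so $i$ induces a well-defined injective map $\iota\colon\mathcal N_g\to\mathcal N_h$ sending the $g$-null geodesic through $v$ to the $h$-null geodesic through (a rescaling of) $di(v)$. Since $i$ is an open embedding and the cogeodesic foliation is local, $\iota$ is continuous — indeed convergence of $\dot\eta_k(0)$ in $TM$ pushes forward to convergence in $TN$. Second, apply $\iota$ to the bad sequence: $\iota([\sigma_k])$ converges in $\mathcal N_h$ both to $\iota([\gamma_1])$ and to $\iota([\gamma_2])$. Because $\mathcal N_h$ is Hausdorff (global hyperbolicity, via \cite{Low}), we conclude $\iota([\gamma_1])=\iota([\gamma_2])$, and by injectivity of $\iota$ this forces $[\gamma_1]=[\gamma_2]$ as elements of $\mathcal N_g$ — wait, that would already be the contradiction, so I need to be careful: the subtlety is that $\gamma_1$ and $\gamma_2$ might be the same null geodesic in $N$ but different in $M$, because the geodesic can exit $i(M)$ and re-enter, or because the two $M$-geodesics are sub-arcs of a single $N$-geodesic. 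So the real content is: $\gamma_1$ and $\gamma_2$ lie on a common inextendible null geodesic $\Gamma$ in $(N,h)$, and the points $p=\gamma_1(0)$, $q=\lim\sigma_k(t_k)$ both lie on $\Gamma\cap i(M)$.

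Third — and this is where causal simplicity of $(M,g)$ enters — I would examine the causal relationship between $p$ and $q$ inside $M$. Along $\Gamma$ the points $i(p)$ and $i(q)$ are causally related in $N$; pulling back, there is a sequence $p_k:=\sigma_k(0)\to p$ and $q_k:=\sigma_k(s_k)\to q$ with $(p_k,q_k)\in J^+_g$ (each $\sigma_k$ is an honest $g$-null geodesic segment in $M$ from $p_k$ to a point near $q$). By closedness of $J^+_g$ (causal simplicity), $(p,q)\in J^+_g$, so there is a $g$-causal curve from $p$ to $q$; since $i(p),i(q)$ lie on a single $h$-null geodesic and $i$ is conformal, this causal curve must be a reparametrized null geodesic, forcing it to be the $g$-null geodesic $\gamma_{v_1}$ continued — hence $q\in\gamma_{v_1}$. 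The analogous argument with $q$ in place of $p$ (using $E^-$, or running time backwards) puts $p$ on $\gamma_{v_2}$. Combining, $\gamma_{v_1}$ and $\gamma_{v_2}$ share two points $p,q$ and are both null geodesics in a strongly causal spacetime, so they agree on the segment between $p$ and $q$, which by the $\mathcal N_g$-convergence of tangent directions means $[\gamma_{v_1}]=[\gamma_{v_2}]$ — the desired contradiction.

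**Main obstacle.**

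The delicate point is the third step: translating "$i(p),i(q)$ on a common $h$-null geodesic" plus "$(p,q)\in J^+_g$" into "$q$ lies on $\gamma_{v_1}$" within $M$. One must rule out that the connecting $g$-causal curve is strictly timelike (excluded because $i(p),i(q)$ are $h$-horismotically related, and conformal maps preserve the timelike/null dichotomy, so an $M$-timelike curve would map to an $N$-timelike curve contradicting that $i(q)\in E^+_h(i(p))$) and, more subtly, that the $h$-null geodesic segment realizing the causal relation in $N$ actually stays inside $i(M)$ so that it pulls back to a $g$-causal (null) curve — here one uses that causal simplicity gives a causal curve in $M$ itself, and then conformal invariance of null geodesics identifies it with the $M$-null geodesic through $v_1$. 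Handling the bookkeeping of affine parameters $t_k,s_k\to\infty$ or staying bounded, and the possibility $p=q$ (where the argument simplifies, as then $v_1,v_2$ are parallel and $[\gamma_{v_1}]=[\gamma_{v_2}]$ immediately), rounds out the proof.
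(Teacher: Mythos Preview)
Your first two steps are fine and match the paper: the conformal embedding sends the would-be non-Hausdorff pair $[\gamma_1],[\gamma_2]$ to the same class in $\mathcal N_h$, so $\gamma_1,\gamma_2$ are sub-arcs of a single inextendible $h$-null geodesic $\Gamma$; and causal simplicity gives $(p,q)\in J^+_g$ from the approximating segments $\sigma_k|_{[0,s_k]}\subset M$.

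The genuine gap is your third step. You assert that $i(p)$ and $i(q)$ are $h$-horismotically related, and use this to rule out a timelike $g$-curve from $p$ to $q$. But you have only shown that $i(p),i(q)$ lie on the common null geodesic $\Gamma$; nothing prevents $i(q)$ from lying beyond the null cut point of $i(p)$ along $\Gamma$, in which case $(i(p),i(q))\in I^+_h$ and the $g$-causal curve from $p$ to $q$ furnished by causal simplicity may well be timelike. Even when $(p,q)\in E^+_g$, the $M$-null geodesic realizing it need not be tangent to $v_1$ at $p$: past the cut point there can be several $h$-null geodesics from $i(p)$ to $i(q)$, and the $M$-null geodesic could project to any of them, not to $\Gamma$. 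Your ``conformal invariance identifies it with the $M$-null geodesic through $v_1$'' is precisely the unjustified step.

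What the paper actually proves, and what your sketch is missing, is the much harder statement that $\Gamma^{-1}(i(M))$ is \emph{connected} (Lemma~\ref{L4}). This is the real content: once you know the $N$-geodesic cannot exit $i(M)$ and re-enter, $\gamma_1$ and $\gamma_2$ are visibly the same $M$-geodesic. The proof of Lemma~\ref{L4} occupies most of the section and requires a chain of auxiliary results (Lemmas~\ref{L1}, \ref{L41}, \ref{L2}) analyzing how past null cones $E^-_h$ behave near a boundary point $\eta(0)\in\partial i(M)$, together with the index form of null geodesics to control conjugate points along limits of the $\sigma_k$. Causal simplicity enters repeatedly in these lemmas, but in a more delicate way than a single application of closedness of $J^+_g$.
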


\begin{exmp}
If $(M,g)$ embeds conformally into $(N,h)$ the space $\mathcal{N}_g$ does not in general embed into $\mathcal{N}_h$. Consider the two dimensional Minkowski spacetime $I(\overline{N},\overline{h})$ with $\overline{N}:=\R^2$ and the Lorentzian inner product $\overline{h}=dx^2-dy^2$. 
Let $M$ be the open interior of the convex hull of $\{(0,0),(3/2,1/2), (1,1),(1/2,-1/2)\}$. Clearly $(M,dx^2-dy^2)$ is globally hyperbolic, hence causally simple.
Next consider the quotient $N:=\overline{N}/\Z$ where $\Z\times\R^2\to \R^2$, $(k,(x,y))\mapsto (x+k,y)$.
Since the action is isometric for $\overline{h}$, a Lorentzian metric is induced on $N$. This metric is globally hyperbolic as well.
Note that the canonical projection $\overline{N}\to N$ is a diffeomorphism from $M$ onto its image which will be denoted with $M$ as well. 
With this it follows that $M\subset N$ is globally hyperbolic.
The null geodesic in $N$ which lifts to the null geodesic through $(1/4,1/4)$ with direction $(-1,1)$ intersects $M\subset N$ twice.
Therefore the map $\mathcal{N}_g\to \mathcal{N}_h$ induced by the inclusion is not injective. This shows that one cannot expect an embedding of 
$\mathcal{N}_g$ into $\mathcal{N}_h$ even if the $(M,g)$ conformally embeds into $(N,h)$, thus giving a counterexample to \cite[Conjecture 3.7]{chernov18}.
\end{exmp}

Looking at the assumptions of Theorem \ref{cor1} one can wonder if it is necessary to assume the conformal embedding into a globally hyperbolic spacetime 
or if the space of null geodesics for every casually simple spacetime is Hausdorff. The following construction will show that both there are causally simple spacetimes
which do not embed into a globally hyperbolic one and whose space of null geodesics is not Hausdorff. The constructed spacetime thus disproves 
\cite[Conjecture 3.6]{chernov18}.

Consider a smooth function $r\colon \R\to\R$ with $r|_{(0,1)} >0$, $r(0)=r(1)=0$ and $|r'(0)|,|r'(1)|<\frac{1}{2\pi}$. The graph of $r|_{(0,1)}$ defines a surface of revolution 
$\Sigma$ parametrized by 
$$X\colon (0,1)\times \R\to \R^3,\; (x,\phi)\mapsto (x,r(x)\cos\phi,r(x)\sin(\phi)).$$
The induced metric on $\Sigma$ is given by 
$$k=\left[1+(r'(x))^2\right]dx^2+r^2(x)d\phi^2.$$

\begin{theorem}\label{T2}
The spacetime 
$$(M,g)=(\R\times \Sigma,-dt^2+k)$$
is causally simple. Further the space of null geodesics of $(M,g)$ is not Hausdorff and $(M,g)$ does not admit a conformal embedding into a 
globally hyperbolic spacetime.
\end{theorem}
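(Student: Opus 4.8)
The plan is to exploit that $(M,g)=(\R\times\Sigma,-dt^{2}+k)$ is a standard static spacetime, so that its causality is controlled by the Riemannian geometry of the surface of revolution $(\Sigma,k)$ — topologically a twice–punctured sphere — and then to produce a family of ``spiralling'' null geodesics witnessing non-Hausdorffness; the non-embedding statement is then immediate from Proposition \ref{T1}. (That $(M,g)$ is a connected, time–oriented Lorentzian manifold is clear.)

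\emph{Causal simplicity.} Since $g^{*}(dt,dt)=-1$, the coordinate $t$ is a temporal function, so $(M,g)$ is stably causal, in particular causal. For a metric $-dt^{2}+k$ one checks directly that $((t_{1},p),(t_{2},q))\in J^{+}_{g}$ exactly when $p$ and $q$ are joined by a $k$–rectifiable curve of length at most $t_{2}-t_{1}$; as $d_{k}$ is continuous, $J^{+}_{g}$ is then closed provided any two points of $\Sigma$ are joined by a $k$–minimizing geodesic. Since $(\Sigma,k)$ is incomplete at both ends $x\to 0,1$ this needs proof, and I would argue by compactification: adjoin the two missing poles $P_{0},P_{1}$ to obtain a space $\widehat\Sigma\cong S^{2}$ with the induced length metric. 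It is a compact length space, and its tangent cone at $P_{i}$ is a flat cone of angle $2\pi\,|r'(i)|/\sqrt{1+r'(i)^{2}}<2\pi|r'(i)|<1$ by hypothesis, in particular strictly less than $\pi$. By the Hopf--Rinow theorem for complete locally compact length spaces, any two points of $\widehat\Sigma$ are joined by a minimizing geodesic; and such a geodesic cannot pass through a conical point of angle $<\pi$, where it would subtend an angle $<\pi$ on one side and could be shortened. For $p,q\in\Sigma$, comparing with the meridian through $P_{i}$ one sees that $d_{\widehat\Sigma}(p,q)<\dist(p,P_{i})+\dist(P_{i},q)$ for $i=0,1$; hence a minimizing geodesic of $\widehat\Sigma$ joining $p$ and $q$ avoids $P_{0},P_{1}$, lies in $\Sigma$, and is a $k$–minimizer there. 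Therefore $J^{+}_{g}$ is closed and $(M,g)$ is causally simple, hence also strongly causal.

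\emph{$\mathcal N_{g}$ is not Hausdorff.} A future null geodesic of $(M,g)$ has the form $s\mapsto(t_{0}+s,\sigma(s))$ with $\sigma$ a unit–speed geodesic of $(\Sigma,k)$. Fix $x_{0}\in(0,1)$ with $r(x_{0})=\max r$ and, for small $c>0$, let $\sigma_{c}$ be the geodesic issuing from $(x_{0},0)$ in the unit direction of decreasing $x$ with Clairaut invariant $r^{2}\dot\phi=c$. For small $c$ the geodesic $\sigma_{c}$ runs toward $P_{0}$, turns around at its minimal $x$–value $\epsilon_{c}$ (where $r(\epsilon_{c})=c$, and $\epsilon_{c}\to0$), and returns to $\{x=x_{0}\}$; let $q_{c}$ be that first return point, $w_{c}$ the upward unit tangent of $\sigma_{c}$ there, and
\[
T_{c}=2\int_{\epsilon_{c}}^{x_{0}}\frac{r\sqrt{1+(r')^{2}}}{\sqrt{r^{2}-c^{2}}}\,dx\ \longrightarrow\ 2\int_{0}^{x_{0}}\sqrt{1+(r')^{2}}\,dx=:2L_{0}\qquad(c\to 0)
\]
the $k$–arclength from $(x_{0},0)$ to $q_{c}$. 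Lift $\sigma_{c}$ to the future null geodesic $\gamma_{c}$ with $\gamma_{c}(0)=(0,(x_{0},0))$. As $c\to 0$ the initial direction of $\sigma_{c}$ tends to the meridional one, so $\dot\gamma_{c}(0)$ converges to the velocity of the future null geodesic $\bar\gamma$ starting at $(0,(x_{0},0))$ over the meridian $\{\phi=0\}$ traversed toward $P_{0}$. Passing to a subsequence with $q_{c}\to q_{\star}\in\{x=x_{0}\}$ and $w_{c}\to w_{\star}$, the velocity $\dot\gamma_{c}(T_{c})$ — based at $\gamma_{c}(T_{c})=(T_{c},q_{c})\to(2L_{0},q_{\star})$ — converges to the velocity of the future null geodesic $\bar\gamma'$ over the meridian through $q_{\star}$ traversed toward $P_{1}$. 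Since $s\mapsto\gamma_{c}(s+T_{c})$ is an affine reparametrisation of $\gamma_{c}$, this yields $[\gamma_{c}]\to[\bar\gamma]$ and $[\gamma_{c}]\to[\bar\gamma']$ in $\mathcal N_{g}$. But $\bar\gamma$ and $\bar\gamma'$ are distinct — their $\Sigma$–projections are meridians traversed in the direction of opposite ends, so at any common point their velocities are non-proportional — hence $\mathcal N_{g}$ is not Hausdorff.

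\emph{No conformal embedding, and the main difficulty.} As $(M,g)$ is causally simple while $\mathcal N_{g}$ is not Hausdorff, Proposition \ref{T1} (in contrapositive) shows that $(M,g)$ admits no conformal embedding into a globally hyperbolic spacetime. The heart of the argument is the causal simplicity: one must rule out that a length–minimizing sequence of curves between two points of $\Sigma$ escapes to an incomplete end, and this is exactly where the smallness of $|r'(0)|,|r'(1)|$ is used — it forces the ends to be \emph{convex} cone points of angle $<\pi$, which no minimizing curve can profitably wind around; compactifying to $\widehat\Sigma$ is the convenient way to package this. The Clairaut estimates in the second part are elementary but must be arranged to produce two genuinely distinct limit null geodesics; note that the argument remains valid when $r'(0)=0$, in which case $\sigma_{c}$ winds arbitrarily often before returning and the passage to a subsequence is unavoidable.
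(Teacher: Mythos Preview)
Your proof is correct and follows the same three-part skeleton as the paper: (i) show that any two points of $(\Sigma,k)$ are joined by a minimizing geodesic, so that $J^+_g=\{((t_1,p),(t_2,q)):t_2-t_1\ge d_k(p,q)\}$ is closed; (ii) exhibit a sequence of null geodesics with two distinct limits in $\mathcal N_g$; (iii) apply Proposition~\ref{T1} (equivalently Theorem~\ref{cor1}) in contrapositive.

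The one genuinely different choice is in step (i). The paper argues elementarily: from $\dist^k(\{x=x_0\},\{x=x_1\})\ge|x_1-x_0|$ and $L^k(\{x=x_0\})=2\pi r(x_0)$ one gets, using $|r'(0)|,|r'(1)|<\tfrac{1}{2\pi}$, that near either end the circumference of the parallel circle is strictly smaller than its radial distance to that end, so length-minimizing sequences between fixed $p,q$ are trapped in a compact region of $\Sigma$ and a minimizer exists. Your route---metric completion $\widehat\Sigma\cong S^2$, Hopf--Rinow for compact length spaces, and the fact that minimizers avoid cone points of total angle $<\pi$---is more conceptual and makes the role of the hypothesis on $|r'|$ transparent (it is exactly what forces the cone angle $2\pi|r'(i)|/\sqrt{1+r'(i)^2}<1<\pi$). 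The paper's version buys elementarity and stays inside smooth Riemannian geometry; yours buys a cleaner geometric explanation and handles the degenerate case $r'(i)=0$ uniformly as a cusp.

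For step (ii) the paper is much terser: a sequence of complete $k$-unit-speed geodesics whose initial directions approach a meridian direction converges locally in every $C^l$ to a \emph{union} of meridians, and hence the corresponding null geodesics have several limits in $\mathcal N_g$. Your Clairaut construction is a concrete realisation of the same phenomenon. One minor caution: the displayed limit $T_c\to 2L_0$ is not entirely immediate, since the integrand is singular at the turning point $x=\epsilon_c$ and the estimate near there depends on the behaviour of $r'$; in any case only convergence of $T_c$ along a subsequence is actually needed for your argument, and even that can be bypassed by the paper's local-$C^l$ formulation.
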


\section{proofs}

\subsection{Proof of Proposition \ref{prop1}}

For orientable $2$-dimensional spacetimes the co-null cones are the union of two transversal $1$-dimensional co-distributions. Thus the space of null geodesics is the union 
of two leaf spaces of two transversal foliations of $M$. 

Assume first that $(M,g)$ is causally simple. By \cite[Proposition 11.4.2]{Brickell} it suffices to show that $\mathcal{N}_g$ is Hausdorff. Since the quotient topology 
on $\mathcal{N}_g$ is second countable it suffices to show that limits of sequences are unique. Let $\kappa^1,\kappa^2\in \mathcal{N}_g$
and $\{\kappa_n\}_{n\in\N}\subset \mathcal{N}_g$ be a sequence with $\kappa_n\to \kappa^1$ and $\kappa_n\to \kappa^2$. Choose parametrizations $\eta^1\colon J_1\to M$,
$\eta^2\colon J_2\to M$ and $\eta_n\colon J_n\to M$ of $\kappa^1$, $\kappa^2$ and $\kappa_n$, respectively and $s\in J_1\cap J_n$ and $t\in J_2\cap J_n$ with 
$\dot\eta_n(s)\to \dot\eta^1(s)$ and $\dot\eta_n(t)\to \dot\eta^2(t)$. By relabelling $\eta^1$ and $\eta^2$ one can assume that $s<t$. 
Since $(M,g)$ is causally simple one has $(\eta^1(s),\eta^2(t))\in J^+_M$. 

It is well known that in $2$-dimensional spacetimes no pair of points is conjugated along a null geodesic. Further since $M$ is simply connected 
and the null geodesics form two transversal foliations of $M$, no null geodesic of $(M,g)$
has cut points. Thus a null geodesic is up to parametrization the unique causal curve connecting any pair of points on it. This shows $\eta^1\equiv \eta^2$, i.e.
$\kappa^1=\kappa^2$. Thus limits in $\mathcal{N}_g$ are unique, i.e. the quotient topology on $\mathcal{N}_g$ is Hausdorff. 

Now assume that $\mathcal{N}_g$ is a smooth manifold. Since every leaf of $\mathcal{F}_\text{null}$ is connected, the manifold $\mathcal{N}_g$ is itself simply connected. 
Thus $\mathcal{N}_g$ is diffeomorphic to the disjoint union of two open intervals $I_1, I_2$. Denote with $f_i\colon M\to I_i$ for $i=1,2$ the canonical maps. Note that both maps
are smooth. By switching the orientation if necessary one can assume that $df_i(v)\ge 0$ for future pointing $v\in TM$ and $i=1,2$. It follows that $v\in TM$ is future pointing 
if and only if $df_1(v)\ge 0$ and $df_2(v)\ge 0$. 

\begin{lemma}
For $p,q\in M$ one has $(p,q)\in J^+_M$ if and only if $f_1(q)\ge f_1(p)$ and $f_2(q)\ge f_2(p)$.
\end{lemma}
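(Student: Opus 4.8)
The plan is to prove both implications. The forward direction is immediate from the definition of the maps $f_i$: if $(p,q)\in J^+_M$ there is a future-pointing causal curve $c\colon[0,1]\to M$ from $p$ to $q$, and since $\frac{d}{ds}(f_i\circ c)(s)=df_i(\dot c(s))\ge 0$ for all $s$ (as $\dot c(s)$ is future-pointing causal, hence a limit of future-pointing vectors), we get $f_i(q)=f_i(c(1))\ge f_i(c(0))=f_i(p)$ for $i=1,2$. So the real content is the converse.

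For the converse, assume $f_1(q)\ge f_1(p)$ and $f_2(q)\ge f_2(p)$; I want to produce a future-pointing causal curve from $p$ to $q$. The key observation already noted in the excerpt is the pointwise characterization: $v\in TM$ is future-pointing (timelike or null) iff $df_1(v)\ge 0$ and $df_2(v)\ge 0$, with $v$ timelike iff both are strictly positive. The idea is to move from $p$ toward $q$ along the two transversal null foliations: the level sets of $f_1$ are the leaves of one null foliation, the level sets of $f_2$ the leaves of the other, so $(f_1,f_2)\colon M\to I_1\times I_2$ is a local diffeomorphism realizing ``null coordinates.'' I would first handle the case where $p$ and $q$ lie in a single coordinate chart for $(f_1,f_2)$: there one simply takes the curve whose $(f_1,f_2)$-image is the straight segment from $(f_1(p),f_2(p))$ to $(f_1(q),f_2(q))$, which is future-pointing causal because both coordinates are non-decreasing along it. The general case requires chaining such segments, and this is where the main obstacle lies.

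The main obstacle is thus a connectedness/reachability argument: one must show that the set of $q'$ with $f_1(q')\ge f_1(p)$, $f_2(q')\ge f_2(p)$ that are reachable from $p$ by a future-pointing causal curve is all of $\{q'\colon f_i(q')\ge f_i(p),\ i=1,2\}$. I would argue this is both open and closed in that set. Openness is clear from the local ``null coordinate'' picture. For the closedness one uses that $(M,g)$ is causally simple, hence $J^+_M$ is closed, together with the fact (from Proposition \ref{prop1}'s proof setup) that null geodesics in this setting have no cut points, so causal futures behave well; alternatively one invokes that the image $(f_1,f_2)(M)$ together with the partial order pulls back the coordinate partial order on a region of $I_1\times I_2$. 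One subtlety to address: $(f_1,f_2)$ need not be injective or surjective onto a rectangle, so I must be careful that the chained segments stay inside $(f_1,f_2)(M)$ and that the two conditions $f_i(q)\ge f_i(p)$ genuinely suffice — here simple connectedness of $M$ and of the leaves is what rules out the ``Minkowski minus a point'' type obstruction, and this is exactly the point where causal simplicity (closedness of $J^+_M$) must be fed in to close the argument.
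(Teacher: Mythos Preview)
Your forward direction is correct and matches the paper. The converse, however, has a genuine logical gap: you invoke causal simplicity (``$(M,g)$ is causally simple, hence $J^+_M$ is closed'') to obtain closedness in your open--closed connectedness argument. But look at where this lemma sits in the proof of Proposition~\ref{prop1}. The lemma appears in the direction ``$\mathcal{N}_g$ is a smooth manifold $\Rightarrow$ $(M,g)$ is causally simple''; the \emph{purpose} of the lemma is precisely to exhibit $J^+_M$ as a preimage of a closed set under $(f_1,f_2)$, thereby proving it closed. Assuming $J^+_M$ closed inside the argument is circular. Your alternative suggestion (pull back the coordinate order via $(f_1,f_2)$) is too vague to assess and would require knowing, e.g., that $(f_1,f_2)$ is a diffeomorphism onto a suitable region --- something you have not established.

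The paper avoids this by a direct constructive argument that uses only the standing hypotheses (simple connectedness, $\mathcal{N}_g$ Hausdorff) through the fact that the leaves of each null foliation are connected. Starting from an \emph{arbitrary} curve $\alpha$ from $p$ to $q$, the proof repeatedly replaces subarcs by null geodesic segments (which exist between any two points on the same leaf, by connectedness of leaves) to force first $f_i(p)\le f_i\circ\alpha\le f_i(q)$, and then, after perturbing so that $f_1\circ\alpha$ and $f_2\circ\alpha$ have only non-degenerate critical points, to successively eliminate local minima of $f_1$ and then of $f_2$ along the curve. The end product is a Lipschitz curve whose one-sided derivatives all satisfy $df_1\ge 0$ and $df_2\ge 0$, hence are future-pointing. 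No appeal to closedness of $J^+_M$ is made. Your local ``null coordinate'' picture is morally the same as the paper's use of the $f_i$, but the global step must be carried out without assuming the conclusion.
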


\begin{proof}
Assume that $(p,q)\in J^+_M$. Let $\gamma\colon [0,1]\to M$ be a future pointing curve between $p$ and $q$. Then one has 
$$f_i(q)-f_i(p)=\int_0^1 df_i(\dot\gamma(t))dt \ge 0$$ 
for $i=1,2$. 

Now assume $f_1(q)\ge f_1(p)$ and $f_2(q)\ge f_2(p)$. Let $\alpha\colon [0,1]\to M$ be a curve between $p$ and $q$. Set 
$$t_1:=\max\{t\in [0,1]|\; f_1\circ\alpha (t)=f_1(p)\text{ or }f_2\circ \alpha (t)=f_2(p)\}.$$
There exists null geodesic $\beta_1\colon [0,t_1]\to M$ between $p$ and $\alpha(t_1)$ since the leafs of the null foliations are connected. By the intermediate value theorem
one has $f_i(\alpha(t_1))\ge f_i(p)$ for $i=1,2$, i.e. $\beta_1$ is future pointing. Replace $\alpha|_{[0,t_1]}$ with $\beta_1$. For the resulting $\alpha_1\colon [0,1]\to M$ one 
has $f_i\circ \alpha_1\ge f_i(p)$ for $i=1,2$. Next let 
$$t_2:=\min\{t\in [0,1]|\; f_1\circ\alpha_1 (t)=f_1(q)\text{ or }f_2\circ \alpha_1 (t)=f_2(q)\}.$$
The null geodesic $\beta_2\colon [t_2,1]\to M$ between $\alpha_1(t_2)$ and $q$ exists and is future pointing by the same argument as before.
Replace $\alpha_1|_{[t_2,1]}$ with $\beta_2$. The resulting curve $\alpha_2\colon [0,1]\to M$ satisfies $f_i(p)\le f_i\circ \alpha_2\le f_i(q)$ for $i=1,2$. 

If $f_1\circ \alpha_2$ or $f_2\circ \alpha_2$ is constant, then $p$ and $q$ lie on a common null geodesic. By the assumptions it  follows that $(p,q)\in J^+_M$.
One can thus assume that both $f_1\circ \alpha_2$ and $f_2\circ \alpha_2$ are non-constant. Perturb $\alpha_2$ to a smooth curve $\alpha_3\colon [0,1]\to M$ 
between $p$ and $q$ with $f_i(p)\le f_i\circ \alpha_3\le f_i(q)$ for $i=1,2$ and such that $f_1\circ \alpha_3$ has only non-degenerate critical points. 
Choose a local minimum of $f_1\circ \alpha_3$ and a parameter $s\in [0,1]$ where it is attained. Let
$$r:=\min\{r'|\; r'<s,\, f_1\circ \alpha_3(r')=f_1\circ \alpha_3(s)\}.$$
Replace $\alpha_3|_{[r,s]}$ with the future pointing null geodesic between the endpoints. Continue inductively over the set of local minima attained outside
of $[r,s]$. 
For the obtained Lipschitz continuous curve $\alpha_4\colon[0,1]\to M$ all left- and right-sided differentials $\dot\alpha_4^\pm$ exists and one has 
$df_1(\dot\alpha_4^\pm)\ge 0$.

Perturb $\alpha_4$ to a smooth curve $\alpha_5$ such that $df_1(\dot\alpha_5)\ge 0$ and $f_2\circ \alpha_5$ has only non-degenerate critical points. 
One distinguishes two cases: First, if $f_1\circ \alpha_5\equiv \text{const}$ the curve $\alpha_5$ can be reparametrized to a  future pointing null 
geodesic, thus showing $q\in\J^+_M(p)$. Second, if $df_1(\dot\alpha_5)> 0$ somewhere, one can perturb $\alpha_5$ such that $df_1(\dot\alpha_5)> 0$
everywhere. Choose a local minimum of $f_2\circ \alpha_5$ and a parameter $u\in [0,1]$ where it is attained and repeat the induction as in the last 
paragraph. 
For the obtained Lipschitz continuous curve $\alpha_6\colon[0,1]\to M$ all left and right sided differentials $\dot\alpha_6^\pm$ exists and one has 
$df_1(\dot\alpha_6^\pm),df_2(\dot\alpha_6^\pm)\ge 0$. Thus all left and right sided derivatives are future pointing, i.e. $\alpha_6$ is a future pointing curve connecting $p$ and $q$.
\end{proof}

\subsection{Proof of Proposition \ref{T1}}

\begin{lemma}\label{L3}
Let $(\mathcal{M},\mathcal{G})$ be a spacetime such that $E_{\mathcal{G}}^+$ is closed and non-empty. Consider a sequence 
$$\{(p_n,q_n)\}_{n\in\N}\subset E_{\mathcal{G}}^+$$
converging to $(p,q)\in E_{\mathcal{G}}^+$ and a sequence 
$$\{\abb{\eta_n}{[0,b_n]}{\mathcal{M}}\}_{n\in\N}$$
of null geodesics with $\eta_n(0)=p_n$ and $\eta_n(b_n)=q_n$. Then there exists a null geodesic $\eta$ 
connecting $p$ and $q$ such that up to a subsequence $[\eta_n]\rightarrow [\eta]\in\mathcal{N}_{\mathcal{G}}$.
\end{lemma}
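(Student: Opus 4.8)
The plan is to obtain the limiting null geodesic from a convergent subsequence of suitably normalised initial velocities of the $\eta_n$, and then to use the closedness of $E_{\mathcal{G}}^+$ to force that geodesic through $q$. Concretely, fix an auxiliary complete Riemannian metric $h$ on $\mathcal{M}$. Replacing each $\eta_n$ by an orientation-preserving affine reparametrization --- which does not change the class $[\eta_n]\in\mathcal{N}_{\mathcal{G}}$ --- normalise so that $|\dot\eta_n(0)|_h=1$. The vectors $\dot\eta_n(0)$ are future-pointing $\mathcal{G}$-null and lie in the $h$-unit sphere bundle over a compact neighbourhood of $p$, so after passing to a subsequence $\dot\eta_n(0)\to v$, a future-pointing $\mathcal{G}$-null vector at $p$. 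Let $\abb{\eta}{[0,\Omega)}{\mathcal{M}}$ be the inextendible future $\mathcal{G}$-null geodesic with $\dot\eta(0)=v$. By the characterisation of the quotient topology on $\mathcal{N}_{\mathcal{G}}$ through convergence of initial velocities, $[\eta_n]\to[\eta]$ along this subsequence, so it only remains to prove that $q$ lies on $\eta$, i.e. $q=\eta(b)$ for some $b\in[0,\Omega)$.

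Let $\Phi$ denote the $\mathcal{G}$-geodesic flow on $T\mathcal{M}$; its domain in $\R\times T\mathcal{M}$ is open and $\Phi$ is continuous. Thus $\eta_n=\gamma_{\dot\eta_n(0)}$, and for every fixed $t<\Omega$ the point $\gamma_{\dot\eta_n(0)}(t)$ is defined for $n$ large and converges to $\eta(t)$. Put $b^*:=\liminf_n b_n\in[0,\infty]$. If $b^*<\Omega$, pass to a subsequence with $b_n\to b^*$; since $(b^*,v)$ then lies in the open domain of $\Phi$, continuity gives $q=\lim_n q_n=\lim_n\gamma_{\dot\eta_n(0)}(b_n)=\eta(b^*)$, and we are done.

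It remains to treat the case $b^*\ge\Omega$. Fix any $t_0\in(0,\Omega)$. For $n$ large we have $b_n>t_0$, hence $\eta_n(t_0)$ is defined, $\eta_n(t_0)\to\eta(t_0)$, and $\eta_n|_{[t_0,b_n]}$ is a future causal curve from $\eta_n(t_0)$ to $q_n$; so $q_n\in J^+_{\mathcal{G}}(\eta_n(t_0))$, while $q_n\notin I^+_{\mathcal{G}}(\eta_n(t_0))$ since otherwise concatenation with $\eta_n|_{[0,t_0]}$ would yield $q_n\in I^+_{\mathcal{G}}(p_n)$, contradicting $(p_n,q_n)\in E^+_{\mathcal{G}}$. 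Thus $(\eta_n(t_0),q_n)\in E^+_{\mathcal{G}}$, and letting $n\to\infty$ --- using $\eta_n(t_0)\to\eta(t_0)$, $q_n\to q$ and the closedness of $E^+_{\mathcal{G}}$ --- we obtain $(\eta(t_0),q)\in E^+_{\mathcal{G}}$, in particular $q\in J^+_{\mathcal{G}}(\eta(t_0))$. If $q=\eta(t_0)$ there is nothing to prove; otherwise pick a future causal curve $c$ from $\eta(t_0)$ to $q$ and concatenate $\eta|_{[0,t_0]}$ with $c$. Its endpoints satisfy $(p,q)\notin I^+_{\mathcal{G}}$, so by the standard fact that a causal curve joining horismos-related points is, up to reparametrization, an unbroken null geodesic (see \cite{Sanchez}), this curve has no break at $\eta(t_0)$; hence it continues $\eta|_{[0,t_0]}$ as an unbroken null geodesic, so it is a sub-arc of the inextendible geodesic $\eta=\gamma_v$, and $q=\eta(b)$ for some $b\in[t_0,\Omega)$.

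The step I expect to be the real obstacle is precisely the case $b^*\ge\Omega$: the connecting null geodesics $\eta_n$ may run off to unbounded affine parameter, and then the limit endpoint cannot be read off from continuity of the geodesic flow; one has to transport the horismos relation onto $\eta$ using that $E^+_{\mathcal{G}}$ is closed, together with the structure of causal curves between horismos-related points. The two routine points I have deliberately suppressed are that an orientation-preserving affine reparametrization does not alter $[\eta_n]$, and that a limit of future-pointing $\mathcal{G}$-null vectors is again future-pointing $\mathcal{G}$-null.
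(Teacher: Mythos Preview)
Your proof is correct and follows essentially the same strategy as the paper's: extract a limiting null geodesic through $p$ from the normalised initial data, then in the ``unbounded'' case use the closedness of $E^+_{\mathcal G}$ together with the fact that a causal curve between horismos-related points is a null pregeodesic to force $q$ onto that geodesic.

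The differences are cosmetic rather than substantive. The paper reparametrises by Riemannian arclength and invokes the limit curve theorem to obtain the limit $\eta^R$, whereas you normalise the initial velocities and use only continuity of the geodesic flow; your route is slightly more elementary and avoids citing an external limit-curve result. In the hard case, the paper argues by contradiction with the future inextendibility of the arclength limit curve, while you simply concatenate $\eta|_{[0,t_0]}$ with a causal curve from $\eta(t_0)$ to $q$ and read off $q\in\eta$ directly from the hypothesis $(p,q)\in E^+_{\mathcal G}$; this is arguably cleaner. The one hypothesis you use that the paper's version keeps more in the background is precisely that $(p,q)\in E^+_{\mathcal G}$, which makes the concatenation a null pregeodesic --- but this is exactly the assumption of the lemma, so there is no gap.
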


\begin{proof}
Choose a complete Riemannian metric on $\mathcal{M}$. The following properties hold up to a subsequence of $\{\eta_n\}_{n\in\N}$ due to the limit curve 
theorem, see \cite{Minguzzi} or \cite{BS1}: Let $\abb{\eta^R_n}{[0,c_n]}{\mathcal{M}}$ be a Riemannian arclength parametrisation of $\eta_n$. The
sequence $\{\eta^R_n\}_{n\in\N}$ converges uniformly on compact subsets with respect to the Riemannian metric to a causal curve $\abb{\eta^R}{[0,c)}{\mathcal{M}}$ with 
$\eta^R(0)=p$. Since the $\eta_n^R$'s are null pregeodesics so will be $\eta^R$. 

If $c<\infty$ one concludes that $\eta^R$ extends uniquely to $c$ with $\eta^R(c)=q$. Let $\eta\colon[0,b]\to \mathcal{M}$ be an affine 
parameterisation of $\eta^R$. It follows that $\eta$ is a null geodesic between $p$ and $q$ with $[\eta_n]\rightarrow [\eta]
\in\mathcal{N}_{\mathcal{G}}$.

Otherwise one has $c=\infty$ and $\eta^R$ is future inextensible. 
Choose $0<s<t<\infty$ and $n$ sufficiently large such that $t<c_n$.  By a standard argument one has 
$$(\eta_n^R(s),q_n),(\eta_n^R(t),q_n), (\eta^R_n(s),\eta^R_n(t))\in  E_{\mathcal{G}}^+.$$
The assumptions that $\eta_n^R(s)\rightarrow \eta^R(s)$ and $\eta_n^R(t)\rightarrow \eta^R(t)$ as well as 
that $E_{\mathcal{G}}^+$ is closed imply
$$(\eta^R(s),q),(\eta^R(t),q),(\eta^R(s),\eta^R(t))\in E_{\mathcal{G}}^+.$$
The up to parametrisation unique causal curve connecting 
$\eta^R(s)$ and $q$ has to be $\eta^R$ since $\eta^R$ is the unique causal curve between $\eta^R(s)$ and $\eta^R(t)$.
Otherwise this would imply $q\in I^+_\mathcal{G}(\eta^R(s))$. This contradicts the future inextensibility of $\eta^R$.
\end{proof}

Let $C_h^+\subset E_h^+$ denote the future null cut locus in $N$, i.e. $(p,q)\in C_h^+$ if $(p,q)\in E_h^+$ and there exists a null geodesic from 
$p$ to $q$ that stops being unique at $q$, see \cite[Chapter 9]{Beem}. 
Let 
$$L_g^+:=E_g^+\cap( E_h^+\setminus C_h^+).$$ 
Obviously is $L^+_g$ the set of pairs of points in $M$ connected by future directed causal curve in $(N,h)$ and $(M,g)$ unique up to parametrisation.

\begin{lemma}\label{L1}
Let $M\subset N$ be open, $h$ a globally hyperbolic Lorentzian metric on $N$ such that $(M,g):=(M,h|_M)$ is causally simple. 
Then the set $L_g^+$ is a connected component of 
$$(E_h^+\setminus C_h^+)\cap (M\times M).$$
\end{lemma}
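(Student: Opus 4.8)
The plan is to prove that $L_g^+$ is a non-empty, open, closed and connected subset of $A:=(E_h^+\setminus C_h^+)\cap(M\times M)$; since a non-empty open closed connected subset of a topological space is exactly one of its connected components, this suffices. I would first isolate the basic mechanism: for $(p,q)\in A$, the relation $(p,q)\in E_h^+\setminus C_h^+$ means that, up to affine reparametrization, there is a unique future directed $h$-causal curve from $p$ to $q$, whose (null geodesic) image I denote $\gamma_{p,q}$. The key point is that $(p,q)\in L_g^+$ if and only if $\gamma_{p,q}\subseteq M$. Indeed, if $\gamma_{p,q}\subseteq M$ then it is a $g$-null geodesic, so $(p,q)\in J_g^+$, and $(p,q)\notin I_g^+$ because a $g$-timelike curve would be $h$-timelike, contradicting $(p,q)\in E_h^+$; hence $(p,q)\in E_g^+$ and $(p,q)\in L_g^+$. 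Conversely, if $(p,q)\in L_g^+\subseteq E_g^+$, any $g$-causal curve from $p$ to $q$ is also $h$-causal, hence reparametrizes $\gamma_{p,q}$, and being contained in $M$ it forces $\gamma_{p,q}\subseteq M$. Since $(M,g)$ and $(N,h)$ are causal, the diagonal $\Delta_M$ lies in $E_g^+$, in $E_h^+$ and outside $C_h^+$, so $\Delta_M\subseteq L_g^+$; thus $L_g^+\neq\emptyset$, and $\Delta_M$ is homeomorphic to $M$, which is connected since $(M,g)$ is a spacetime.

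I expect the closedness step to carry the real content of the lemma, and it is exactly here that causal simplicity of $(M,g)$ enters. Let $(p_n,q_n)\in L_g^+$ converge to $(p,q)\in A$. Then $(p_n,q_n)\in E_g^+\subseteq J_g^+$, and $J_g^+$ is closed, so $(p,q)\in J_g^+$: there is a $g$-causal curve in $M$ joining $p$ to $q$. By the mechanism above this curve reparametrizes $\gamma_{p,q}$, so $\gamma_{p,q}\subseteq M$, whence $(p,q)\in L_g^+$. Without causal simplicity the limiting null geodesic need not stay inside $M$ — as in Minkowski space with a point removed — so this is precisely the obstruction the hypothesis removes.

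For openness I would run a limit curve argument in $(N,h)$ along the lines of Lemma \ref{L3}. Since $(N,h)$ is globally hyperbolic, $\mathcal N_h$ is a smooth manifold, hence Hausdorff, so $E_h^+$ is closed by Proposition \ref{prop2} and Lemma \ref{L3} is available for $(N,h)$. Fix $(p,q)\in L_g^+$, so $\gamma:=\gamma_{p,q}\subseteq M$, and take $(p_n,q_n)\in A$ with $(p_n,q_n)\to(p,q)$ and associated null geodesics $\gamma_n:=\gamma_{p_n,q_n}$. Lemma \ref{L3}, together with the uniqueness of the null geodesic from $p$ to $q$ (here one uses $(p,q)\notin C_h^+$), shows that every subsequence of $(\gamma_n)$ has a further subsequence converging to $\gamma$ in $\mathcal N_h$, hence — by continuous dependence of geodesics on their initial data — uniformly on a fixed compact parameter interval; therefore $\gamma_n\to\gamma$. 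As $\gamma$ is a compact subset of the open set $M$, this gives $\gamma_n\subseteq M$ for all large $n$, so $(p_n,q_n)\in L_g^+$ by the mechanism above. Hence $L_g^+$ is open in $A$.

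It remains to show $L_g^+$ is connected, and I would do this by path-connecting it to $\Delta_M$. Given $(p,q)\in L_g^+$, parametrize $\gamma_{p,q}$ as $\gamma\colon[0,1]\to M$ with $\gamma(0)=p$ and $\gamma(1)=q$, and consider $s\mapsto(p,\gamma(s))$, a path from $(p,p)\in\Delta_M$ to $(p,q)$. For each $s$, the segment $\gamma|_{[0,s]}$ gives $(p,\gamma(s))\in J_g^+\cap J_h^+$; moreover $(p,\gamma(s))\notin I_h^+$, hence $\notin I_g^+$, because $(p,\gamma(s))\in I_h^+$ with $(\gamma(s),q)\in J_h^+$ would force $(p,q)\in I_h^+$, contradicting $(p,q)\in E_h^+$; and $(p,\gamma(s))\notin C_h^+$, since if $\gamma(s)$ were the first null cut point of $\gamma$ from $p$ for some $s<1$ then $q$ would lie beyond it, giving $(p,q)\in I_h^+$ by \cite[Chapter 9]{Beem}, again impossible, while the case $s=1$ is the hypothesis. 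Thus the path stays in $L_g^+$, and since $\Delta_M$ is connected, $L_g^+$ is path connected. Altogether $L_g^+$ is a non-empty, open, closed and connected subset of $A$, hence a connected component of $A$.
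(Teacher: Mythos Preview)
Your proof is correct and follows essentially the same route as the paper's: both establish that $L_g^+$ is closed in $A$ via causal simplicity, open in $A$ via a limit-curve argument through Lemma~\ref{L3}, and path-connected by sliding the endpoint along the unique null geodesic back to the diagonal. The only cosmetic difference is that the paper handles closedness more tersely (observing that $E_g^+$ is closed in $M\times M$, so $L_g^+=E_g^+\cap A$ is closed in $A$ immediately) and phrases the openness step contrapositively (if $\gamma_n\not\subseteq M$, pick $\eta_n(t_n)\in N\setminus M$ and pass to a limit on $\gamma\subseteq M$, contradicting openness of $M$), but these are the same arguments you give.
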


\begin{proof}
Since $(M,g)$ is causally simple the set $E_g^+$ is closed in $M\times M$.
Hence 
$$L_g^+=E_g^+\cap ((E_h^+\setminus C_h^+)\cap (M\times M))$$ 
is closed in $(E_h^+\setminus C_h^+)\cap (M\times M)$ in the subspace topology.

To show that it is also open, assume that the complement of $L_g^+$, 
$$((E_h^+\setminus C_h^+)\cap (M\times M))\setminus L_g^+,$$ 
is not closed. Then there exists a sequence $(p_n,q_n)\rightarrow (p,q)$ with 
$$(p_n,q_n)\in ((E_h^+\setminus C_h^+)\cap (M\times M))\setminus L_g^+\text{ and }(p,q)\in L_g^+.$$
Thus there exist null geodesics $\eta_n\colon[a_n,b_n]\to N$ connecting $p_n$ and $q_n$ converging due to 
Lemma \ref{L3} to the unique null geodesic $\eta\colon[a,b]\to M$ connecting $p$ and $q$. Note that the $\eta_n$'s are unique up to parametrization 
since $(p_n,q_n)\in E^+_h\setminus C^+_h$ for all $n\in\N$. The curves are further not contained in $M$ since this would imply $(p_n,q_n)\in E^+_g$, 
which contradicts the assumption $(p_n,q_n)\notin L^+_g$.
Hence there exist points $\eta_n(t_n)\in N\setminus M$.
A subsequence of $\eta_n(t_n)$ converges to a point $\eta(t)\in M$. This contradicts $M$ being open in $N$. Hence $L_g^+$ is also open in 
$(E_h^+\setminus C_h^+)\cap (M\times M)$. This shows that $L^+_g$ is a union of connected components of $(E_h^+\setminus C_h^+)\cap (M\times M)$.

It remains to show that $L_g^+$ is path-connected. Since $h$ is globally hyperbolic the set $E^+_h\setminus C^+_h$ contains the diagonal in $N\times N$. 
Since $M$ is an open subset of $N$ this implies that $E^+_g$ contains the diagonal of $M\times M$. Further for $(p,q)\in E^+_h\setminus C^+_h$ and 
$\eta\colon [0,1]\to N$ a casual curve from $p$ to $q$ one has $(p,\eta(t))\in E^+_h\setminus C^+_h$ for all $t\in[0,1]$. The same goes for 
$(p,q)\in E^+_M$ and any causal curve connecting the points in $M$. This shows that $L^+_g$ is path-connected.
\end{proof}

\begin{lemma}\label{lemma_connect}
Let $(N,h)$ be a globally hyperbolic spacetime.
Let $\eta\colon I\to N$ be an inextensible null geodesic and $s,u\in I$ with $s<u$ such that $\eta(u)\in E^+_{h}(\eta(s))$. 
Then for all $t\in [s,u)$ there exists $r\in I$ with $r<s$ such that 
$\eta|_{[r,t]}$ is up to parametrization the unique causal curve between $\eta(r)$ and $\eta(t)$.
\end{lemma}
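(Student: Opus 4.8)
The plan is to exploit the fact that in a globally hyperbolic spacetime the relation $E^+_h$ (equivalently, being the unique causal curve up to parametrization) is characterised by the absence of null cut points, and that cut points along a null geodesic, once they occur, persist: if $\eta(u)$ is reached from $\eta(s)$ along a null geodesic without any cut point up to $u$, then the same holds for every intermediate pair $\eta(s),\eta(t)$ with $t\in[s,u]$. First I would fix $t\in[s,u)$ and recall from \cite[Chapter 9]{Beem} that $\eta(u)\in E^+_h(\eta(s))$ means the segment $\eta|_{[s,u]}$ achieves the null cut locus no earlier than $u$; hence $\eta|_{[s,t]}$ is, up to parametrization, the unique causal curve from $\eta(s)$ to $\eta(t)$, and in particular $\eta(t)\notin I^+_h(\eta(s))$.

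Next I would argue that one can push the starting parameter strictly below $s$. Consider the set
$$A:=\{r\in I\cap(-\infty,s]\ :\ \eta(t)\notin I^+_h(\eta(r))\}.$$
The key observation is that $A$ is open in $I\cap(-\infty,s]$: if $\eta(t)\in I^+_h(\eta(r_0))$ for some $r_0$, then since $I^+_h$ is open and $\eta$ is continuous, $\eta(t)\in I^+_h(\eta(r))$ for all $r$ near $r_0$; taking complements gives openness of $A$. Since $s\in A$ and $s$ is not the left endpoint of $I$ (because $\eta$ is inextensible, $I$ has no maximal or minimal point attained in $I$ in the sense relevant here — more precisely $I$ is open, or at least $s$ is interior), there is $r<s$ with $r\in A$, i.e. $\eta(t)\notin I^+_h(\eta(r))$. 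Combined with $\eta(r)\in J^+_h$-past of $\eta(t)$ along $\eta$, this yields $(\eta(r),\eta(t))\in E^+_h$.

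Finally, to upgrade $(\eta(r),\eta(t))\in E^+_h$ to uniqueness of $\eta|_{[r,t]}$ as a causal curve, I would invoke that in a globally hyperbolic spacetime any two causal curves with the same endpoints realising the horismos relation must have the same image: a point of $E^+_h(\eta(r))$ is connected to $\eta(r)$ by a null geodesic, and if it admitted a second such connecting causal curve that would force a null cut point at or before $\eta(t)$, contradicting $\eta(u)\in E^+_h(\eta(s))$ and the persistence of cut points (a cut point of $\eta|_{[r,\cdot]}$ at parameter $\le t$ would, since $r<s$, also be a cut point of $\eta|_{[s,\cdot]}$ at parameter $\le t<u$). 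Hence $\eta|_{[r,t]}$ is the unique causal curve between its endpoints up to parametrization, which is the claim.

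The main obstacle I anticipate is the bookkeeping around cut points: making precise the statement that ``cut points persist under shrinking the left endpoint and extending the right endpoint'' and deducing from $\eta(u)\in E^+_h(\eta(s))$ that no cut point of $\eta|_{[r,\cdot]}$ occurs at a parameter $\le t$. This requires care because moving the base point from $\eta(s)$ to $\eta(r)$ with $r<s$ could a priori introduce a new first cut point strictly before $\eta(t)$; the resolution is that the null cut point of $\eta$ from $\eta(r)$, were it at parameter $t'\le t<u$, would make $\eta(t')\in I^+_h(\eta(r))$ or a conjugate point, either of which (using $r<s<t'$, or pushing the base point forward again to $s$) contradicts $\eta(u)\in E^+_h(\eta(s))$ via the inclusion $I^+_h(\eta(r))\supseteq$ nothing problematic — one must instead use that a cut point from $\eta(r)$ at $t'$ implies a cut point from $\eta(s)$ at some parameter $\le t'$, by the elementary monotonicity of the cut function along a fixed geodesic. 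Once that monotonicity is cleanly stated, the rest is the open/closed argument above.
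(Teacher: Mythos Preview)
Your argument contains a genuine logical error in the openness claim for
$$A=\{r\in I\cap(-\infty,s]:\eta(t)\notin I^+_h(\eta(r))\}.$$
What you actually prove is that the \emph{complement} of $A$ is open: you take $r_0$ with $\eta(t)\in I^+_h(\eta(r_0))$ and use openness of $I^+_h$ to get the same for nearby $r$. ``Taking complements'' then shows $A$ is closed, not open, and a closed set containing the right endpoint $s$ of $(a,s]$ gives you no $r<s$ in $A$. This is not a slip that can be repaired by rewording: the condition $(\eta(r),\eta(t))\notin I^+_h$ is a closed condition, so $A$ really is closed, and at the cut parameter it can genuinely fail to contain a left neighbourhood of $s$.

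The fallback you propose in the final paragraph --- a monotonicity statement ``a cut point from $\eta(r)$ at $t'$ implies a cut point from $\eta(s)$ at some parameter $\le t'$'' --- is neither standard nor proved, and there is no reason it should hold: moving the base point backward along a null geodesic can produce a new first cut point (either a new conjugate point or a new second geodesic) without forcing any cut point from the later base point. What is actually needed is a \emph{stability} statement: since $t<u$ lies strictly before the first cut point from $\eta(s)$, it remains before the first cut point from $\eta(r)$ for $r$ sufficiently close to $s$. The paper obtains exactly this, but by a different route: it uses that the absence of conjugate points on $(s,u)$ makes $d\,\mathrm{Exp}$ non-singular at $(\xi-s)\dot\eta(s)$ for $\xi=t$, so $\mathrm{Exp}$ is a local diffeomorphism there, giving local uniqueness of the geodesic from $\eta(\nu)$ to $\eta(t)$ for $\nu$ near $s$; global uniqueness in $N$ is then forced by a limit curve argument (a hypothetical second causal curve from $\eta(\nu)$ to $\eta(t)$ would, as $\nu\to s$, limit to a second causal curve from $\eta(s)$ to $\eta(t)$, contradicting $\eta(u)\in E^+_h(\eta(s))$). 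If you want to salvage your approach, the clean substitute for your openness/monotonicity claims is to invoke continuity of the null cut function on a globally hyperbolic spacetime (see \cite[Chapter 9]{Beem}) to pass from $c(s)\ge u>t$ to $c(r)>t$ for $r$ close to $s$; but note that this continuity is itself a nontrivial input.
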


\begin{proof}
Let the open set $\mathbb{U}\subset\R\times TN$ be the maximal domain of the geodesic flow $\abb{\Phi^h}{\mathbb{U}}{TN}$ of 
$h$.
Recall that there exists a neighbourhood of the zero section in $TN$ such that $\{1\}\times U\subset \mathbb{U}$.
Consider 
$$\mathbb{V}:=\{v\in TN|\{1\}\times \{v\}\in\mathbb{U}\}$$
and define the exponential map of $(N,h)$ as 
$$\mathrm{Exp}\colon\mathbb{V}\to N\times N, v\mapsto (\pi_{TN}(v),\pi_{TN}\circ\Phi^h(1,v)),$$
where $\abb{\pi_{TN}}{TN}{N}$ denotes the canonical projection.
Then the exponential map at a point $p\in N$ is defined as
$$\abb{\exp_p}{\mathbb{V}\cap T_pN}{N}, v \mapsto \pi_{TN}\circ\Phi^h(1,v).$$
Both $\mathrm{Exp}$ and $\exp_p$ are smooth and $\abb{d\,\mathrm{Exp}_v}{T_vTN}{T_{(p,\exp_p(v))}(N\times N)}$ is non-degenerate if and only if 
$\abb{d(\exp_p)_v}{T_v(TN_p)}{T_{\exp_p(v)}N}$ is non-degenerate where $p:=\pi_{TN}(v)$, see e.g. \cite{Lee}.

If $\eta(u)\in E^+_{h}(\eta(s))$ for some $u>s$, then $\eta|_{[s,\xi]}$ is the unique causal curve in $N$
between $\eta(s)$ and $\eta(\xi)$ for all $s<\xi<u$.
Since $\eta$ is the unique causal geodesic between $\eta(s)$ and $\eta(u)$ no $\eta(\xi)$ is conjugate to $\eta(s)$ 
along $\eta$ for $\xi\in (s,u)$, see \cite{Beem}.
This yields $d(\exp_{\eta(s)})_{(\xi-s)\dot{\eta}(s)}$ is non-degenerate.
By the above equivalence this implies that $d\,\mathrm{Exp}$ is non-degenerate at $v:=(\xi-s)\dot{\eta}(s)$.
With the implicit function theorem one knows that $\mathrm{Exp}$ is a local diffeomorphism from a neighborhood $U_v$ of $v$ in $TN$ onto a neighborhood $V_v$ of 
$(p,\exp_p(v))$ in $N\times N$. Therefore $\eta|_{[\nu,\xi]}$ is the unique geodesic between $\eta(\nu)$ and $\eta(\xi)$ for $\nu$ sufficiently close to 
$s$ in a neighborhood of $\eta|_{[s,u]}$. The curve $\eta|_{[\nu,\xi]}$ is in fact the unique causal geodesic between its endpoints 
in $N$ for $\nu$ close to $s$. Indeed assume that there exists a causal geodesic $\abb{\tilde{\eta}}{[\nu,\xi]}{N}$ different from $\eta|_{[\nu,\xi]}$ with 
$\tilde{\eta}(\nu)=\eta(\nu)$ and $\tilde{\eta}(\xi)=\eta(\xi)$. Since $\tilde{\eta}$ has to leave a neighborhood of $\eta|_{[s,u]}$ every limit curve of $\tilde{\eta}$ 
for $\nu\rightarrow s$ is a causal geodesic between $\eta(s)$ and $\eta(\xi)$ different from $\eta|_{[s,\xi]}$.
This contradicts the assumption that $\eta(u)\in E_{h}^+(\eta(s))$.
The limit geodesic exists by the limit curve theorem in \cite{Minguzzi,BS1} and the assumption that $N$ is globally hyperbolic.
\end{proof}

\begin{lemma}\label{L41}
Let $N$ be a smooth manifold of dimension at least $3$, $M\subset N$ open and $(N,h)$ globally hyperbolic such that $(M,h|_M)$ is causally simple. 
Further let $\eta\colon [-1,1]\to N$ be a null geodesic with $\eta|_{[-1,0)}\subset M$ and $\eta_n\colon [-1,1]\to M$ be a sequence of null geodesics
with $\dot\eta_n(0)\to \dot \eta(0)$. Assume 
$$[u_n,1]:=\eta_n^{-1}(J^+_h(\eta(0)))\neq \emptyset$$ 
for infinitely many $n\in\N$ and $\liminf u_n=0$. Then 
one has $\eta(0)\in M$.
\end{lemma}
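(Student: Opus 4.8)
The plan is to argue by contradiction: assume $\eta(0)\notin M$. Since $\dot\eta_n(0)\to\dot\eta(0)$, continuous dependence of the geodesic flow gives $\eta_n\to\eta$ in $C^1$ on $[-1,1]$, and after passing to a subsequence we may assume $u_n\to 0$; then $q_n:=\eta_n(u_n)\to\eta(0)$ with $\dot\eta_n(u_n)\to\dot\eta(0)$, and $q_n\in M$, so $\eta(0)\in\overline M\setminus M=\partial M$. Because $u_n$ is the first parameter at which the continuous curve $\eta_n$ meets the closed set $J^+_h(\eta(0))$, the point $q_n$ lies on $\dot J^+_h(\eta(0))=E^+_h(\eta(0))$ (strong causality of the globally hyperbolic $(N,h)$), so for $n$ large, $q_n$ lying in a convex normal neighbourhood of $\eta(0)$, one has $(\eta(0),q_n)\in E^+_h\setminus C^+_h$ with a unique connecting null geodesic $\rho_n$ in $N$. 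I will also use that $(M,g)$ causally simple forces $E^+_g$ to be closed in $M\times M$: $I^+_g$ is open and $J^+_g$ is closed, so $E^+_g=J^+_g\setminus I^+_g$ is closed.

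The heart of the argument is a reduction: \emph{it suffices to produce some $t_0>0$ with $\eta(t_0)\in M$}. Indeed, pick $\e,t_0>0$ small enough that $\eta|_{[-\e,t_0]}$ lies in a convex neighbourhood; then $\eta|_{[-\e,t_0]}$ is the unique causal curve in $N$ between its endpoints, i.e. $(\eta(-\e),\eta(t_0))\in E^+_h\setminus C^+_h$, and this set is open in $N\times N$ (the implicit function theorem argument in the proof of Lemma~\ref{lemma_connect}). Since $\eta_n(-\e)\to\eta(-\e)\in M$ and $\eta_n(t_0)\to\eta(t_0)\in M$, for $n$ large $(\eta_n(-\e),\eta_n(t_0))\in E^+_h\setminus C^+_h$, while $(\eta_n(-\e),\eta_n(t_0))\in J^+_g$ via the arc $\eta_n|_{[-\e,t_0]}\subset M$. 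Causal simplicity gives $(\eta(-\e),\eta(t_0))\in J^+_g$, and any causal curve in $M$ realizing this is, by the uniqueness above, a reparametrization of $\eta|_{[-\e,t_0]}$; hence $\eta(0)\in M$, contradicting $\eta(0)\notin M$.

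It therefore remains to locate $t_0>0$ with $\eta(t_0)\in M$, and this is where the real work lies. The approach is to exploit the forward arcs $\eta_n|_{[u_n,1]}\subset J^+_h(\eta(0))\cap M$ together with the connecting geodesics $\rho_n$: concatenating $\rho_n$ with $\eta_n|_{[u_n,1]}$ yields a future-directed causal curve $c_n$ in $N$ from $\eta(0)$ to $\eta_n(1)$, and by the limit curve theorem in the globally hyperbolic $(N,h)$ (as used in Lemma~\ref{L3}) a subsequence of the $c_n$ converges to a causal curve from $\eta(0)$ to $\eta(1)$. Applying Lemma~\ref{lemma_connect} to the inextensible extension of $\eta$ with $s=-\e$ and small $u>0$ (using $\eta(u)\in E^+_h(\eta(-\e))$) produces parameters $r<0<t_0$ for which $\eta|_{[r,t_0]}$ is the unique causal curve in $N$ between $\eta(r)\in M$ and $\eta(t_0)$; comparing this curve with the arcs $\eta_n|_{[u_n,\,\cdot\,]}\subset M$ that converge to it, and using that $N\setminus M$ is closed, is meant to force $\eta(t_0)\in M$.

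The main obstacle I anticipate is exactly this last step, namely excluding the degenerate configuration in which $\eta$ remains on $\partial M$ for a whole interval $(0,\delta)$ after leaving $M$ at $0$ — there no interior forward point of $\eta$ is available and the reduction does not apply directly. To rule this out I would use causal simplicity more delicately through Lemma~\ref{L1}: since $(\eta_n(r),\eta_n(t_0))\in E^+_g\cap(E^+_h\setminus C^+_h)=L^+_g$, and $L^+_g$ is a connected component (hence relatively clopen) of $(E^+_h\setminus C^+_h)\cap(M\times M)$, one can slide the first coordinate to a fixed interior point $\eta(r)$ of $\eta|_{[-1,0)}$ while staying in $L^+_g$, obtaining $(\eta(r),\eta_n(t_0))\in E^+_g$ for $n$ large, and then close the argument against causal simplicity. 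I expect the care needed to make this propagation argument work (and to handle the interplay between $E^+_h$-membership of $q_n$ and the behaviour of $\eta_n$ just before $u_n$) to be the technical core of the proof, with the reduction of the second paragraph being the straightforward part.
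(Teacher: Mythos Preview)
Your reduction in the second paragraph is correct and clean: once you know $\eta(t_0)\in M$ for some $t_0>0$, causal simplicity forces $\eta|_{[-\e,t_0]}\subset M$. (A minor slip: $E^+_h\setminus C^+_h$ is \emph{not} open in $N\times N$; but you do not actually need that, only that $\eta_n|_{[-\e,t_0]}$ lies in a convex neighbourhood, which is fine.)

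The gap is exactly where you suspect it: you do not produce a $t_0>0$ with $\eta(t_0)\in M$, and the sliding argument you sketch cannot do it. To apply Lemma~\ref{L1} you need a path from $(\eta_n(r),\eta_n(t_0))$ to $(\eta(r),\eta_n(t_0))$ that stays in $(E^+_h\setminus C^+_h)\cap(M\times M)$. But $(\eta(r),\eta_n(t_0))$ is generically \emph{not} in $E^+_h$ at all: $\eta(r)$ lies on $\eta$, $\eta_n(t_0)$ lies on the distinct geodesic $\eta_n$, so typically $(\eta(r),\eta_n(t_0))\in I^+_h$. Every path you can build from $\eta$, $\eta_n$ and $\rho_n$ alone runs into this obstruction, and the remaining arguments only place $\eta(t_0)$ in $\overline{M}$, not in $M$. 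A symptom of the problem is that you never use the hypothesis $\dim N\ge 3$.

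The paper's proof takes a different route and avoids your reduction entirely. It works in a convex neighbourhood $V$ of $\eta(0)$, fixes $\rho<0$ and a spacelike slice $W_\rho\subset M$ through $\eta(\rho)$, and picks $\upsilon>0$ with $\eta_n(\upsilon)\in I^+_h(\eta(0))\cap V$. The point is that the past light cone $E^-_h(\eta_n(\upsilon))$ meets $W_\rho$ in a \emph{path-connected} set (this is where $\dim N\ge 3$ enters), so one can slide the first coordinate along that set, staying inside $(E^+_h\setminus C^+_h)\cap(M\times M)$, and Lemma~\ref{L1} then shows that every null geodesic from $W_\rho$ to $\eta_n(\upsilon)$ lies in $M$. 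One such geodesic crosses $E^+_h(\eta(0))$ at a point $y_n\in M$; the null geodesic in $V$ from $\eta(0)$ to $y_n$ extends back to a point $x_n\in W_\rho$, and a second application of Lemma~\ref{L1} (sliding inside $W_\rho\cap E^-_h(y_n)$) shows this geodesic also lies in $M$. Since $\eta(0)$ sits on it, $\eta(0)\in M$. The essential idea you are missing is to manufacture paths for Lemma~\ref{L1} by moving along the intersection of a null cone with a spatial slice contained in $M$, rather than trying to move between the distinct geodesics $\eta$ and $\eta_n$.
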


\begin{proof}
Choose an $h$-convex neighborhood $V$ of $\eta(0)$. Fix $\rho\in [-1,0)$ such that 
$\eta(\rho)\in V$ and an $h|_M$-convex neighborhood $W\subset M$ of $\eta(\rho)$.

Let $\tau\colon N\to\R$ be a smooth temporal function. 
By diminishing $V$ and $W$ one can assume that the intersection of both $E^+_h(p)$ and $E^-_h(p)$ with 
$$W_\rho:=W\cap\{\tau= \tau(\eta(\rho))\}$$ 
is path connected for all $p\in V$. Further one can assume that $\tau(\eta_n(\rho))=\tau(\eta(\rho))$ for all $n$.
If $\eta(0)\in \eta_n$ for some $n$ the claim is trivial. Thus one can assume $\eta_n(u_n)\in J^+_h(\eta(0))\setminus \{\eta(0)\}$ for all $n\in\N$.
Then the assumption on $\eta_n$ implies that one can find $\upsilon\in(0,1)$ such that
$$\eta_n(\upsilon)\in I^+_h(\eta(0))\cap V$$
for infinitely many $n$. Choose a compact neighborhood $W'_\rho$ of $\eta(\rho)$ in $W_\rho$ such that there exists 
$$p\in E^-_h(\eta(\upsilon))\cap (W_\rho\setminus W'_\rho)$$ 
and $p_n\in E^-_h(\eta_n(\upsilon))\cap W_\rho$ with $p_n\to p$.
The unique geodesic segment between $p_n$ and $\eta_n(\upsilon)$ belongs to $M$ by Lemma \ref{L1}, since one can find a path in 
$E_h^+\setminus C^+_h\cap (M\times M)$ between $(\eta_n(\rho),\eta_n(\upsilon))$ and $(p_n,\eta_n(\upsilon))$. 
The existence of such a path follows 
from the fact that $W_\rho$ is chosen such that $W_\rho\cap E_h^-(\eta_n(\upsilon))$ is path-connected.
The intersection $y_n$ of the geodesic segment between $p_n$ and $\eta_n(\upsilon)$ with $E^+_h(\eta(0))$ 
converges to $\eta(\upsilon)$ because the intersection of the geodesic segment between $p$ and $\eta(\upsilon)$ with 
$E^+_h(\eta(0))$ is $\eta(\upsilon)$. The unique geodesic in $V$ between $\eta(0)$ and $y_n$ intersects 
$W_\rho$ to the past in a point $x_n$ since $y_n\to \eta(\upsilon)$ and the unique geodesic between $\eta(0)$ and $\eta(\upsilon)$ 
is $\eta$. Like before Lemma \ref{L1} implies $x_n\in E^-_g(y_n)$ using a path between $p_n$ and $x_n$ in $W_\rho\cap E_h^-(y_n)\setminus 
C^-_h(y_n)$. Therefore the geodesic segment between $x_n$ and $y_n$ lies in $M$. It follows that $\eta(0)\in M$. 
\end{proof}

\begin{lemma}\label{L2}
Let $N$ be a smooth manifold of dimension at least $3$, $M\subset N$ open and $(N,h)$ globally hyperbolic such that $(M,h|_M)$ is causally simple. 
Further let $\eta\colon [-1,1]\to N$ be a null geodesic with $\eta|_{[-1,0)}\subset M$ and $\eta_n\colon [-1,1]\to M$ be a sequence of null geodesics
with $\dot\eta_n(0)\to \dot\eta(0)$. Assume that all $\eta_n$ are disjoint from $J^+_h(\eta(0))$. Then there exists a neighborhood $U$ of $\eta(0)$ in $N$ 
such that 
$$E^-_h(\eta(0))\cap U\setminus \{\eta(0)\}\subset M.$$
\end{lemma}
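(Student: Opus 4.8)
The plan is to run a contradiction argument parallel to Lemma~\ref{L41}, but now exploiting the hypothesis that the $\eta_n$'s \emph{avoid} $J^+_h(\eta(0))$ rather than meeting it. First I would fix an $h$-convex neighbourhood $V$ of $\eta(0)$ and a parameter $\rho\in[-1,0)$ with $\eta(\rho)\in V$, together with an $h|_M$-convex neighbourhood $W\subset M$ of $\eta(\rho)$; as in Lemma~\ref{L41} I would arrange, via a temporal function $\tau$ with $\tau(\eta_n(\rho))=\tau(\eta(\rho))$, that $E^{\pm}_h(p)\cap W_\rho$ is path-connected for all $p\in V$, where $W_\rho:=W\cap\{\tau=\tau(\eta(\rho))\}$. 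The goal is to show $E^-_h(\eta(0))\cap U\setminus\{\eta(0)\}\subset M$ for $U$ a suitably small neighbourhood of $\eta(0)$ inside $V$.

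Suppose not: then there is a sequence $z_k\in E^-_h(\eta(0))\setminus\{\eta(0)\}$ with $z_k\to\eta(0)$ and $z_k\notin M$. Each $z_k$ lies on the unique (in $V$) null geodesic from some point of $W_\rho$ to $\eta(0)$; tracking that geodesic back to $W_\rho$ gives a point $x\in W_\rho\cap E^-_h(\eta(0))$, and since $z_k\to\eta(0)$ along directions converging to $\dot\eta(0)$, these basepoints converge to $\eta(\rho)$. Now I would transfer the picture to the approximating geodesics: using that $\dot\eta_n(0)\to\dot\eta(0)$ and that $E^-_h$ depends continuously near the non-conjugate segment $\eta|_{[\rho,0]}$, one can choose $x_n\in W_\rho\cap E^-_h(\eta_n(0))$ close to $\eta_n(\rho)$ and points $z_n$ on the unique null geodesic from $x_n$ towards $\eta_n(0)$ that escape $M$ (they are close to the escaping $z_k$'s, hence outside the open set $M$). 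By Lemma~\ref{L1} — using the path-connectedness of $W_\rho\cap E^-_h(\eta_n(0))$ to build a path in $(E^+_h\setminus C^+_h)\cap(M\times M)$ joining $(\eta_n(\rho),\eta_n(0))$-type pairs to $(x_n,\cdot)$-type pairs — the relevant geodesic segments issuing from $x_n$ inside $V$ must lie in $M$, which contradicts $z_n\notin M$, provided we can legitimately run the argument all the way up to $\eta_n(0)$.

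The honest obstacle is exactly that last point: Lemma~\ref{L1} gives containment in $M$ only for pairs lying in $E^+_h\setminus C^+_h$, and a priori a point of $E^-_h(\eta(0))$ very close to $\eta(0)$ could be joined to $\eta_n(0)$ by a geodesic passing \emph{through} $J^+_h(\eta(0))$ or developing a cut point — and the hypothesis ``$\eta_n$ disjoint from $J^+_h(\eta(0))$'' is there precisely to prevent the geodesic segment from entering the region where the $\eta_n$'s are not controlled. So I would use that hypothesis to argue that the target endpoint can be taken on $\eta_n$ (or arbitrarily close to it) with the connecting segment staying in the region where $E^+_h\setminus C^+_h$ holds: concretely, the convexity of $V$ forces the geodesic from $x_n$ to a point just below $\eta(0)$ to shadow $\eta|_{[\rho,0]}$, hence to approximate a subsegment of $\eta_n$ near $\dot\eta_n(0)$, which by $h$-convexity of $V$ is free of cut points and conjugate points. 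Passing to the limit $n\to\infty$ along a subsequence (limit curve theorem, global hyperbolicity of $N$) then pins the limiting connecting geodesic to $\eta|_{[\rho,0]}$, and openness of $M$ together with the forced containment $z_n\in M$ contradicts $z_n\notin M$.

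In short, the structure mirrors Lemma~\ref{L41}: set up convex neighbourhoods and a temporal normalisation, push the failure of $E^-_h(\eta(0))\cap U\setminus\{\eta(0)\}\subset M$ onto the approximating geodesics, invoke Lemma~\ref{L1} via path-connected slices of $W_\rho$ to force the relevant segments into $M$, and derive a contradiction with $M$ being open. The delicate step — and the place where the ``disjoint from $J^+_h(\eta(0))$'' hypothesis is genuinely used — is ensuring the connecting geodesics from $W_\rho$ to points near $\eta(0)$ never leave the cut-point-free, non-conjugate region over which Lemma~\ref{L1} applies, which is what lets the containment in $M$ propagate all the way up to $\eta(0)$.
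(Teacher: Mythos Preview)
Your setup (convex neighbourhoods, the slice $W_\rho$, the appeal to Lemma~\ref{L1}) is fine, but the argument misses the mechanism that actually makes the lemma work, and in one place it is simply wrong.

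First, the paper's proof is not by contradiction. It fixes a parameter $\upsilon\in(0,1)$ \emph{in the future} of $0$ with $\eta(\upsilon)\in V$ and, for each null geodesic $\beta\colon[0,1]\to V$ ending at $\eta(0)$, locates intersection points $\beta(t_n)\in E^-_h(\eta_n(\upsilon))$ with $t_n\uparrow 1$. The hypothesis ``$\eta_n$ disjoint from $J^+_h(\eta(0))$'' is used exactly once, and very concretely: it says $\eta_n(\upsilon)\notin J^+_h(\eta(0))$, i.e.\ $\eta(0)\notin J^-_h(\eta_n(\upsilon))$, which is what guarantees that the past cone $E^-_h(\eta_n(\upsilon))$ cuts $\beta$ strictly \emph{before} $\beta$ reaches $\eta(0)$. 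One then applies Lemma~\ref{L1} with endpoint $\eta_n(\upsilon)\in M$ (not $\eta_n(0)$!) to force $\beta(t_n)\in M$, and since $t_n\to 1$ this yields $\beta|_{[0,1)}\subset M$. Your proposal never introduces the future parameter $\upsilon$ and never articulates this role of the hypothesis; the vague phrase ``ensuring the connecting geodesics \ldots\ never leave the cut-point-free region'' is not what is going on.

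Second, your contradiction step contains a genuine error. You assume $z_k\notin M$ with $z_k\to\eta(0)$, produce nearby points $z_n$ on geodesics associated to $\eta_n$, and assert ``they are close to the escaping $z_k$'s, hence outside the open set $M$''. Proximity to a point of the \emph{closed} set $N\setminus M$ does not place $z_n$ in $N\setminus M$; this implication is backwards. Without it there is no contradiction to extract, and the rest of the sketch (transferring the failure to $\eta_n$, invoking $E^-_h(\eta_n(0))$) does not repair this: the whole point is that one must work with $E^-_h(\eta_n(\upsilon))$ for $\upsilon>0$, whose boundary sweeps through a punctured neighbourhood of $\eta(0)$ as $n\to\infty$ precisely because $\eta(0)$ is excluded from $J^-_h(\eta_n(\upsilon))$.
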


\begin{proof}
Choose an $h$-convex neighborhood $V$ of $\eta(0)$. Fix $\rho\in [-1,0)$ such that 
$\eta(\rho)\in V$ and an $h|_M$-convex neighborhood $W$ of $\eta(\rho)$ in $M$.
Let $\tau\colon N\to\R$ be a smooth temporal function. 
By diminishing $V$ and $W$ one can assume that the intersection of both $E^+_h(p)$ and $E^-_h(p)$ with 
$$W_\rho:=W\cap\{\tau= \tau(\eta(\rho))\}$$ 
is path connected for all $p\in V$.

Choose $\upsilon\in(0,1)$ such that $\eta(\upsilon)\in V$.
Let $\beta\colon [0,1]\to V$ be a future pointing null geodesic segment with $\beta(1)=\eta(0)$ not parallel to $\eta$.
Thus $\beta|_{[0,1)}\subset I^-_h(\eta(\upsilon))$. 
Hence for every $t<1$ there exists $n_0$ such that for all $n\ge n_0$ one has 
$$\beta(t)\in I^-_h(\eta_n(\upsilon)).$$
If $\beta$ is parallel to any $\eta_n$, then $\eta(0)$ lies on $\eta_n$, hence in $M$. 
Then the claim is trivial since $M$ is an open subset of $N$.

Therefore one can assume that $\beta$ is not parallel to any $\eta_n$. This then holds 
for all null geodesics through $\eta(0)$ sufficiently close to $\beta$. 
Since $\eta(0)\notin J^-_h(\eta_n(\upsilon))$ there exists $t_n\in (t,1)$ such that $\beta(t_n)\in E^-_h(\eta_n(\upsilon))$ for sufficiently large $n$. 
Let $[\gamma_{n,\beta}]\in \mathcal{N}_h$ be the unique class of null geodesics whose representatives contain $\beta(t_n)$ and 
$\eta_n(\upsilon)$. Every representative of $[\gamma_{n,\beta}]$ intersects $W_\rho$ for $n$ sufficiently large since $t_n\to 1$ and 
$\eta_n(\upsilon)\to \eta(\upsilon)$. Thus Lemma \ref{L1} implies that $\beta(t_n)\in M$:
Let $x_n:=\gamma_{n,\beta}\cap W_\rho$. Then one can find a path in $W_\rho\cap E^-_h(\eta_n(\upsilon))$ from $\eta_n(\rho)$ 
to $x_n$ and hence a path in $(E_h^+\setminus C^+_h)\cap (M\times M)$ 
 from $(\eta_n(\rho),\eta_n(\upsilon))$ to $(x_n,\eta_n(\upsilon))$. Since $\gamma_{n,\beta}$ is a causal curve between $x_n$ and $\eta_n(\upsilon)$ 
 unique up to parametrization it follows that $\beta(t_n)\in M$. 

For all $\beta$ with $\beta(0)\in W_\rho$ one has $\beta|_{[0,1)}\subset M$: The claim is trivial for $\beta=\eta$. Therefore assume that $\beta$ is
not parallel to $\eta$. The sub arc of $[\gamma_{n,\beta}]$ between $x_n$ and $\eta_n(\upsilon)$ lies in $M$. Now for every $t_n$ one can chose a path in 
$W_\rho\cap E_h^-(\beta(t_n))$ from $x_n$ to $\beta(0)$. Therefore by Lemma \ref{L1} and local uniqueness of geodesics the geodesic 
arc $\beta|_{[0,t_n]}$ lies in $M$ and since $t_n\rightarrow 1$ this implies $\beta|_{[0,1)}\subset M$.

For geodesics $\beta$ which do not intersect $W_\rho$ to the past let $\beta(t_1)$ and $\beta(t_2)$ be intersections of 
$\beta$ with $E^-_h(\eta_{n_1}(\upsilon))$ and $E^-_h(\eta_{n_2}(\upsilon))$, respectively. Assume $t_1<t_2$ and 
$n_1,n_2$ sufficiently large. Choose $[\gamma_i]:=[\gamma_{n_i,\beta}]\in \mathcal{N}_h$ and $x_i:=x_{n_i}\in W_\rho$ as before.
One has $x_i\in J^-_h(\beta(t_2))$.

The set $J^-_h(\eta_{n_2}(\upsilon))\cap V\setminus \gamma_2$ is foliated by past-pointing null geodesics emanating from points on $\gamma_2$ 
prior to $\eta_{n_2}(\upsilon)$. Consequently a path in $W_\rho\cap J^-_h(\beta(t_2))$ from $x_2$ to $x_1$ joined with 
$\gamma_1|_{[\gamma_1^{-1}(x_1),\gamma_1^{-1}(\beta(t_2))]}$ induce a path in $(E^+_h\setminus C^+_h)\cap (M\times M)$ from $(x_2,x_2)\in L^+_g$ 
to $(\beta(t_1),\beta(t_2))$, i.e. $(\beta(t_1),\beta(t_2))\in J^+_g$. As before this implies $\beta|_{[t_1,t_2]}\subset M$.
\end{proof}

\begin{lemma}\label{L4}
Let $N$ be a smooth manifold of dimension at least $3$, $M\subset N$ open and $(N,h)$ globally hyperbolic such that $(M,h|_M)$ is causally simple. 
Further let 
$$\eta\colon [-1,1]\to N$$ 
be a null geodesic with $\eta|_{[-1,0)}\subset M$ and 
$$\{\eta_n\colon [-1,1]\to M\}_{n\in \N}$$ 
be a sequence of null geodesics with $\dot\eta_n(0)\to\dot\eta(0)$. Then the set $\eta^{-1}(M)$ is connected.
\end{lemma}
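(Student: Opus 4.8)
The plan is to show that $\eta^{-1}(M)$ is an interval by ruling out the existence of a parameter $s_0 > 0$ with $\eta(s_0) \notin M$ while $\eta(s) \in M$ for $s$ slightly larger than $s_0$ (the case of a ``gap'' on the positive side; the negative side is handled by the hypothesis $\eta|_{[-1,0)} \subset M$ together with symmetry of the argument once one knows the component of $0$). Concretely, let $s_0 := \sup\{s \in [-1,1] \mid \eta|_{[-1,s)} \subset M\}$. If $s_0 = 1$ or if $\eta(s) \notin M$ for all $s \in (s_0,1]$ there is nothing more to prove, so assume $\eta(s_0) \notin M$ but there is a sequence $s_k \downarrow s_0$ (or an interval $(s_0, s_0+\delta)$) with $\eta(s_k) \in M$. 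Without loss of generality reparametrise so that the ``bad'' point sits at parameter $0$; that is, it suffices to prove that if $\eta|_{[-1,0)} \subset M$ and $\eta(t_m) \in M$ for some sequence $t_m \downarrow 0$, then $\eta(0) \in M$ as well. This is exactly the configuration set up in Lemmas \ref{L41} and \ref{L2}, so the whole argument reduces to invoking those two lemmas according to a dichotomy.

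The dichotomy is on the behaviour of the approximating sequence $\eta_n$ relative to $J^+_h(\eta(0))$. For each $n$ consider $\eta_n^{-1}(J^+_h(\eta(0))) \cap [0,1]$. Up to passing to a subsequence, either (i) this set is nonempty for infinitely many $n$ and, after shrinking, $\eta_n^{-1}(J^+_h(\eta(0))) = [u_n,1]$ with $\liminf u_n = 0$, or (ii) the $\eta_n$ are eventually disjoint from $J^+_h(\eta(0))$, or (iii) $\eta_n^{-1}(J^+_h(\eta(0))) = [u_n, 1]$ but $\liminf u_n = u_\ast > 0$. In case (i) Lemma \ref{L41} gives $\eta(0) \in M$ directly. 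In case (ii), Lemma \ref{L2} produces a neighbourhood $U$ of $\eta(0)$ with $E^-_h(\eta(0)) \cap U \setminus \{\eta(0)\} \subset M$; since $\eta|_{[-1,0)}$ is a null geodesic ending at $\eta(0)$, the points $\eta(\rho)$ for $\rho \in [-1,0)$ close to $0$ lie in $E^-_h(\eta(0)) \cap U$, hence in $M$, and then causal simplicity of $M$ together with local uniqueness of the geodesic $\eta$ near $\eta(0)$ forces $\eta(0) \in M$ — one applies Lemma \ref{L1} to a short subarc of $\eta$ straddling $\eta(0)$ whose past endpoint is such a point $\eta(\rho)$ and whose future endpoint is some $\eta(t_m) \in M$. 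Case (iii) is reduced to case (i) or (ii) by reparametrising: replacing $\eta(0)$ by $\eta(u_\ast)$, which still lies on the closure of $\eta|_{[-1,0)}\cup$(later portion) and satisfies the hypotheses, one either lands in $M$ immediately (if $u_\ast$ is itself in the interior of $\eta^{-1}(M)$, contradiction) or re-runs the dichotomy with the $\liminf$ now equal to $0$.

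The main obstacle is the bookkeeping of case (iii) and, more importantly, making precise the reparametrisation step that shows it suffices to treat a ``bad'' point adjacent to the known-good half-geodesic $\eta|_{[-1,0)}$: one must check that after sliding the base point to $s_0$, the hypotheses of Lemmas \ref{L41} and \ref{L2} — namely $\eta|_{[-1,0)} \subset M$ and $\dot\eta_n(0) \to \dot\eta(0)$ — survive, which requires translating the convergence $\dot\eta_n(0)\to\dot\eta(0)$ along the geodesic flow and noting that $\eta_n$ may have to be restricted/extended so that $\eta_n|_{[-1,0)}$ still lies in $M$; the latter is automatic on the subarc before the first exit, which is all that the proofs of \ref{L41}, \ref{L2} actually use. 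Once the reduction is in place, the three cases are each a direct citation, so no further analytic work is needed. Finally, connectedness of $\eta^{-1}(M)$ follows: it is an open subset of $[-1,1]$ containing $[-1,0)$, and the argument just given shows it contains the supremum $s_0$ of the first component and then, by the same reasoning applied at $s_0$, a neighbourhood of $s_0$, so the component of $0$ is relatively closed as well as open in $[-1,1]$, hence is all of $\eta^{-1}(M)$.
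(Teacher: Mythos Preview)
Your plan has a genuine gap in case (ii), which is where the real work of the lemma lies. You claim that once Lemma~\ref{L2} yields a neighbourhood $U$ with $E^-_h(\eta(0))\cap U\setminus\{\eta(0)\}\subset M$, one can ``apply Lemma~\ref{L1} to a short subarc of $\eta$ straddling $\eta(0)$'' with past endpoint $\eta(\rho)\in M$ and future endpoint $\eta(t_m)\in M$. But Lemma~\ref{L1} only says $L^+_g$ is \emph{a} connected component of $(E^+_h\setminus C^+_h)\cap(M\times M)$; it does not say every pair in the latter set lies in $L^+_g$. To place $(\eta(\rho),\eta(t_m))$ in $L^+_g$ you need a path in $(E^+_h\setminus C^+_h)\cap(M\times M)$ connecting it to the diagonal, and every natural path --- either $t\mapsto(\eta(\rho),\eta(t))$ or $t\mapsto(\eta(t),\eta(t_m))$ --- passes through the missing point $\eta(0)$. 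Conversely, if $\eta(0)\notin M$ then $(\eta(\rho),\eta(t_m))\notin E^+_g$ (the only $h$-causal curve between them is $\eta$, which leaves $M$), so this pair sits in a \emph{different} component and Lemma~\ref{L1} gives nothing. The conclusion of Lemma~\ref{L2} only concerns the \emph{past} null cone of $\eta(0)$ and provides no bridge across to $\eta(t_m)$ in the future.

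The paper's proof works much harder precisely here. After invoking Lemma~\ref{L41} (to reduce to your case (ii)/(iii)) and Lemma~\ref{L2} (to get the past-cone neighbourhood $U$), it does not cite Lemma~\ref{L1} on $(\eta(\rho),\eta(t_m))$. Instead it builds an auxiliary family: it fixes a nearby $\eta_n$ and a point $p\in I^+_g(\eta_n(w))\cap I^+_g(\eta(w))$, defines slabs $V_\delta\subset E^-_h(\eta(s))\cap U$, and for each $\delta$ finds the infimal time $u_\delta$ at which a concatenated curve $\beta$ enters $J^+_g(V_\delta)$. Letting $\delta\downarrow 0$ produces null geodesics $\gamma_k\subset M$ from $x_k\to\eta(s)$ to $\beta(u_{\delta_k})$, converging to a null geodesic $\gamma$ through $\eta(s)$. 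An index-form argument (Appendix~\ref{A1}) then shows $\gamma$ has no conjugate points on a slightly enlarged interval $[a,b]$ with $a<0$, so $\mathrm{Exp}$ is a local diffeomorphism on a tube around $\gamma$, and a final contradiction argument forces $(\gamma(a'),\gamma(b))\in E^+_g$ for some $a'<0$, whence $\gamma(0)=\eta(s)\in M$. None of this is a ``direct citation''; the index-form/conjugate-point machinery is essential, and your outline omits it entirely.
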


\begin{proof}
Let $r,w\in \eta^{-1}(M)$ with $r<w$ and assume that there exists $r<s <w$ with $\eta(s)\notin M$. Without loss of generality one can assume that
$s$ is minimal in that respect, i.e.
$$s=\inf\{s'>r|\,\eta(s')\notin M\}.$$ 
By Lemma \ref{L41} one knows that the sets $\eta_n^{-1}(J^+_h(\eta(s)))$ are bounded away from $s$. 
Fix $n$ sufficiently large such that there exists $p\in I_g^+(\eta_n(w))\cap I_g^+(\eta(w))$.
Choose a timelike curve $\alpha\colon[w,2]\to M$ from $\eta_n(w)$ to $p$.
Define
$$\abb{\beta}{[-1,2]}{M},\; \beta(t):=\left\lbrace\begin{array}{cc}
\eta_n(t), & t\leq w \\ 
\alpha(t), & t>w.
\end{array}\right.$$

Let $\tau \colon N\to \R$ be a smooth temporal function. Set $\sigma:=\tau(\eta(s))$ and choose a compact neighborhood $U$ of $\eta(s)$ according to 
Lemma \ref{L2} such that $U\cap E_h^-(\eta(s))\setminus \{\eta(s)\}\subset M$. Note that by the assumption that $(M,h|_M)$ is causally simple it follows that 
$\eta(w)\in J^+_g(x)$ for all $x\in U\cap E_h^-(\eta(s))\setminus \{\eta(s)\}$. For $\delta>0$ define 
$$V_\delta :=\tau^{-1}([\sigma-\delta,\sigma))\cap U\cap E_h^-(\eta(s))\subset M$$
and 
$$u_{\delta}:=\inf\{u\in \R|\,\beta(u)\in J_g^+(V_\delta)\}.$$
It follows that the parameter $u_{\delta}$ is bounded from above by $2$ and the function $\delta\mapsto u_{\delta}$ is monotonously decreasing.

For $0<\delta'<\delta$ sufficiently small the set $V_\delta\setminus V_{\delta'}$ is precompact in $M$. Since $(M,g)$ is causally simple the 
precompactness of $V_\delta\setminus V_{\delta'}$ and the monotonicity of $u_\delta$ imply that there exists $x_{\delta}\in V_\delta$ 
such that $\beta(u_{\delta})\in J_g^+(x_{\delta})$.
Furthermore $\beta(u_{\delta})\in E_g^+(x_{\delta})$  by minimality of $u_\delta$.

Take a sequence $\delta_k\downarrow 0$ and a sequence $x_k:=x_{\delta_k}$. By construction one has $x_k\rightarrow \eta(s)$.
Let $\gamma_{k}\colon [0,1]\to M$ be a sequence of null geodesics connecting $x_{k}$ and $\beta(u_{\delta_k})$.
Note  that the sequence $\{u_{\delta_k}\}_{k\in\N}$ is monotonously increasing. 
Since the geodesic flow is smooth one can assume that up to a subsequence the geodesics $\gamma_{k}$ converge in every 
$\mathcal{C}^l$-norm to a null geodesic $\gamma\colon[0,1]\to N$. This geodesic connects $\eta(s)$ and 
$\beta(u_\infty)$, where $u_\infty$ denotes the limit of the sequence $\{u_{\delta_k}\}_{k\in\N}$.

Since $(\gamma_{k}(0),\gamma_{k}(1))\in E_g^+$ one concludes that the index form of every $\gamma_k$ is negative semidefinite (see Appendix \ref{A1}).
Furthermore the negative semi-definiteness is preserved under convergence of geodesics, i.e. the index form of $\gamma$ is negative semi-definite. 
This implies that the index form of $\gamma|_{[0,b]}$ is negative definite for all $b\in (0,1)$. Hence no $\gamma(b)$ is conjugated to 
$\gamma(0)=\eta(s)$ along $\gamma$. Fix $b<1$ such that $\gamma(b)\in M$.

Due to Lemma \ref{L2} one can choose $a<0$ such that $\gamma$ can be extended until $a$ and $\gamma(t)\in M$ for all $t\in [a,0)$.
Since the index form depends continuously on the geodesic one can choose $a$ such that the index form of $\gamma|_{[a,b]}$ is negative 
definite, i.e. no point $\gamma(t)$ is conjugated to $\gamma(a)$ along $\gamma$ for $t\in [a,b]$. Thus $d\exp_{\gamma(a)}$ is non-singular along the line 
$[0,b-a]\cdot\dot\gamma(a)$, i.e. $d\text{Exp}$ is non-singular along the line $[0,b-a]\cdot\dot\gamma(a)$. Thus there 
exists a neighborhood $\mathcal{U}$ of $[0,b-a]\cdot\dot\gamma(a)$ such that 
$$\text{Exp}\colon \mathcal{U}\to \text{Exp}(\mathcal{U})\subset N\times N$$
is a local diffeomorphism. Since $\text{Exp}|_{[0,b-a]\cdot\dot\gamma(a)}$ is bijective one can assume by shrinking $\mathcal{U}$ if necessary that
$\text{Exp}|_\mathcal{U}$ is bijective and $\mathcal{U}$ is fibrewise star-shaped. For $p\in \pi_{TN}(\mathcal{U})$ define $\mathcal{V}_p:=\exp_p(\mathcal{U}\cap TM_p)$.
Let $v\in \mathcal{U}\cap TM_p$ be null. Then \cite[Proposition 5.34]{oneill} implies that there does not exist a timelike curve inside $\mathcal{V}_p$ between $p$ and $\exp_p(v)$.
Applying this to $v=\dot\gamma_k(a)\in \mathcal{U}$ there exists an open neighborhood $\mathcal{V}$ around $\gamma|_{[a,b]}$ such that for sufficiently large $k$ 
the geodesics $\gamma_{k}|_{[a,b]}$ lie inside $\mathcal{V}$ and two points that lie on a $\gamma_{k}|_{[a,b]}$ cannot be connected by a timelike curve inside $\mathcal{V}$.

The claim is now that there exists $a'\in [a,0)$ such that $(\gamma(a'),\gamma(b))\in E^+_g$. Assuming the claim one has $(\gamma(a''),\gamma(b))\in E^+_g$ for all $a''\in [a',0)$. By a 
standard argument it follows that $\gamma|_{[a',b]}\subset M$, especially implying that $\gamma(0)=\eta(s)\in M$. This contradicts the assumption.

The claim is proved if there exists $a'\in [a,0)$ such that $(\gamma_k(a'),\gamma_k(b))\in E^+_g$ for all $k$ sufficiently large. 
Suppose the claim is false, i.e. there exists a sequence $a'_k\uparrow 0$ with $(\gamma_k(a'_k),\gamma_k(b))\in I^+_g$. Choose a 
compact neighborhood $W$ of $\gamma(b)$ in $M\cap \mathcal{V}$. Then for sufficiently large $k$ there exists a null geodesic $\zeta_k\colon [0,1]\to M$ from 
$\gamma_k(a'_k)$ to a point $z_k\in J^-_g(\gamma_k(b))\cap W\setminus \gamma_k$. This follows from the minimality of $u_{\delta_k}$ and $u_{\delta_k}\uparrow u_\infty$.
The geodesics $\zeta_k$ cannot be contained in the neighbourhood $\mathcal{V}$ defined in the previous paragraph, since otherwise $\gamma_k(a)$ and $\gamma_k(b)$ would 
be connected by a timelike curve inside $\mathcal{V}$. A subsequence of $\{\zeta_k\}_{k\in\N}$ converges to a null geodesic $\zeta\colon [0,1]\to N$ connecting $\gamma(0)$ with 
$\gamma(b)$. The second assertion follows since $z_k\to \gamma(b)$ again by the minimality of $u_{\delta_k}$ and $u_{\delta_k}\uparrow u_\infty$.
The geodesic $\zeta$ is not a reparameterization of $\gamma$, i.e. $\dot\zeta(1)$ and $\dot\gamma(b)$ are not parallel. Choose $c<1$ with $\zeta(c)\in W$. Then there
exists $\epsilon >0$ such that $\beta(u_\infty-\epsilon)\in J^+_g(\zeta(c))$. By continuity it follows that $\beta(u_\infty-\epsilon /2)\in J^+_g(\zeta_k(c))\subset J^+_g(\gamma_k(a'_k))$.
Note that for all $\delta >0$ one has either $\gamma_k(a'_k)\in J^+_g(V_\delta)$ or $\gamma_k$ intersects $V_\delta$ for all sufficiently large $k$.
Since $a'_k\uparrow 0$ it follows that $u_\infty <u_{\delta_k}-\epsilon/2$ for all $k$ sufficiently large. This contradicts $\lim_{k\to \infty} u_{\delta_k}=u_\infty$ 
and finishes the proof.
\end{proof}

\begin{proof}[Proof of Proposition \ref{T1}]
Since the null geodesics of two conformal metrics coincide up to reparametrisations, one can assume without loss of 
generality that 
$$i\colon (M,g)\hookrightarrow(N,h)$$ 
is an isometric embedding; in other words $M$ is an open subset of $N$ and $g=h|_M$.

Recall that $\mathcal{N}_g$ was constructed as the quotient of the bundle of null covectors by the action of the Euler vector field and the geodesic flow.
By definition the quotient map is open. Hence the obtained topology on $\mathcal{N}_g$ is second countable and the Hausdorff property is equivalent to the 
uniqueness of limits for converging sequences, see \cite[Proposition 6.5]{Quer}.

Let $[\eta^0], [\eta^1]\in\mathcal{N}_g$
be classes of null geodesics and 
$$\{[\eta_n]\}_{n\in \N}\subset \mathcal{N}_g$$ 
be a sequence such that 
$$\dot{\eta}_n\rightarrow \dot{\eta}^0\text{ and }\dot{\eta}_n\to \dot\eta^1$$
somewhere.
Up to relabelling and reparameterization one can assume that all geodesics are future pointing and $\eta^1\subset J^+_h(\eta^0)$.
From the fact that $\mathcal{N}_h$ is Hausdorff it readily follows that $\eta^0$ and $\eta^1$ are subarcs of the same null geodesic 
$H\colon (\alpha,\omega)\to N$ in $(N,h)$.

By Lemma \ref{L4} the set $H^{-1}(M)$ is connected. Therefore the geodesics $\eta^0$ and $\eta^1$ are subarcs of the same geodesic in $M$, i.e. 
the sequence $\{[\eta_n]\}_{n\in\N}$ has a unique limit in $\mathcal{N}_g$.
\end{proof}

\section{Proof of Theorem \ref{T2}}

Recall that one considers a smooth function $r\colon \R\to\R$ with $r_{(0,1)}>0$, $r(0)=r(1)=0$ and $|r'(0)|,|r'(1)|<\frac{1}{2\pi}$. The graph of $r_{(0,1)}$ defines a surface of 
revolution $\Sigma$ parametrized by 
$$X\colon (0,1)\times \R\to \R^3,\; (x,\phi)\mapsto (x,r(x)\cos\phi,r(x)\sin\phi).$$
The induced metric on $\Sigma$ is given by 
$$k=\left[1+(r')^2\right]dx^2+r^2d\phi.$$

\begin{lemma}\label{L10}
Every geodesic of $(\Sigma,k)$ is either complete or is asymptotic in both direction to $x=0$ or $x=1$. Further every pair of points in $\Sigma$ is connected by a 
minimal geodesic. 
\end{lemma}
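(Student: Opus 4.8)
The plan is to analyze the geodesic equations on the surface of revolution $(\Sigma,k)$ using Clairaut's relation, which is the standard tool for rotational symmetry. Writing a geodesic as $\gamma(s) = X(x(s),\phi(s))$, the Killing field $\partial_\phi$ gives a conserved quantity $\ell := r^2(x)\dot\phi$ (the Clairaut invariant), and since $k$ is Riemannian the speed $[1+(r')^2]\dot x^2 + r^2\dot\phi^2$ is also conserved, which we normalize to $1$. Eliminating $\dot\phi$ yields $[1+(r'(x))^2]\dot x^2 = 1 - \ell^2/r^2(x)$, so the $x$-coordinate moves in the effective potential region $\{x : r^2(x)\ge \ell^2\}$. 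First I would treat the meridian case $\ell = 0$: such geodesics have $\phi$ constant, $\dot x = \pm(1+(r')^2)^{-1/2}$, and they run monotonically across $(0,1)$; I must check they reach the boundary $x=0$ or $x=1$ in finite arclength — this follows because $r'$ is bounded near the endpoints (indeed $|r'(0)|,|r'(1)|<\frac1{2\pi}$), so $(1+(r')^2)^{-1/2}$ stays bounded away from $0$, giving finite length; hence these are the geodesics ``asymptotic in both directions'' to a boundary circle (here actually running off to it).

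Next I would handle $\ell \ne 0$. The motion in $x$ is confined to the closed set $\{r^2 \ge \ell^2\}$, which is a compact subinterval $[a,b]\subset (0,1)$ since $r\to 0$ at both ends; at the turning points $r^2(x) = \ell^2$ one has $\dot x = 0$ and, provided $r'\ne 0$ there, $\ddot x \ne 0$, so $x$ reflects back — the geodesic oscillates between $x=a$ and $x=b$ for all time and is therefore complete. The degenerate subcase is when $\ell^2$ equals the maximum value of $r^2$ at a critical point of $r$, giving a closed geodesic along a parallel circle (or an asymptotic approach to it); these are complete as well. So for $\ell\ne 0$ every geodesic is complete, and for $\ell = 0$ it is the meridian that reaches a boundary in finite time, establishing the dichotomy. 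I should double-check the claim "asymptotic in both directions to $x=0$ or $x=1$" is the intended reading: a meridian geodesic, maximally extended within $\Sigma$, is defined on a bounded parameter interval and its image accumulates on the boundary circle at each end — this is exactly the asymptotic statement, since the boundary circles are not part of $\Sigma$.

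For the second assertion, that every pair of points is joined by a minimizing geodesic, I would argue via completeness of the metric space $(\Sigma, d_k)$ rather than geodesic completeness (which fails for meridians). The key point is that $(\Sigma,k)$, though not geodesically complete, is \emph{metrically} complete: a Cauchy sequence either stays in a compact region $\{a\le x\le b\}$ (and converges) or has $x\to 0$ or $x\to 1$, but the latter is impossible for a Cauchy sequence because the distance from $\{x=\epsilon\}$ to $\{x = \epsilon'\}$ along any curve is bounded below — using $d_k \ge \int |\dot x|\,ds \ge |x(\text{end}) - x(\text{start})|$ and more carefully that approaching the boundary costs a definite amount of length $\int_0^\epsilon \sqrt{1+(r')^2}\,dx$, while the parallel circles at $x=\epsilon$ shrink, so points near the boundary are not Cauchy-problematic in a way that escapes $\Sigma$. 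Hence by Hopf–Rinow for length spaces (a complete, locally compact length space is geodesic), any two points are joined by a minimizing geodesic. The main obstacle is making the metric-completeness argument airtight at the ends: one must rule out Cauchy sequences escaping through $x=0$ or $x=1$, and carefully verify that a minimizing curve between two interior points cannot be forced to touch the (missing) boundary — here the hypothesis $|r'|<\frac1{2\pi}$ near the endpoints is what guarantees the boundary circles have circumference $<1$ shrinking to $0$ while still being ``far'' in the $x$-direction, so a minimizer never gains by detouring toward the boundary; I would formalize this by a cut-and-paste / first-variation comparison showing any minimizer stays in a fixed compact slab determined by the two endpoints.
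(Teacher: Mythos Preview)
Your treatment of the first assertion via Clairaut's integral is correct and is exactly the paper's approach, only spelled out in more detail (the paper simply writes ``follows directly from Clairaut's integral'').

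For the second assertion there is a genuine gap. Your claim that $(\Sigma,d_k)$ is metrically complete is false: take any meridian $\phi=\text{const}$ and points $p_n$ on it with $x(p_n)\to 0$. The distance between $p_n$ and $p_m$ is at most $\int_{x(p_m)}^{x(p_n)}\sqrt{1+(r')^2}\,dx$, which tends to $0$, so $\{p_n\}$ is Cauchy but has no limit in $\Sigma$. Your own observation that the meridians have \emph{finite} length is precisely why completeness fails, not why it holds; the sentence ``approaching the boundary costs a definite amount of length'' cuts the wrong way. Consequently Hopf--Rinow does not apply to $\Sigma$ as stated, and the argument as written does not prove existence of minimizers.

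What actually works---and what you gesture toward at the very end---is the cut-and-paste comparison, and this is exactly the paper's proof. The paper makes it quantitative with one clean inequality: for $x_0$ close to $0$ (similarly near $1$) one has
\[
\dist\nolimits^k(\{x=x_0\},\{x=0\}) \;=\; \int_0^{x_0}\sqrt{1+(r')^2}\,dx \;\ge\; x_0 \;>\; 2\pi r(x_0) \;=\; L^k(\{x=x_0\}),
\]
the strict inequality $x_0>2\pi r(x_0)$ coming from $|r'(0)|<\tfrac{1}{2\pi}$. Thus any curve that dips from $\{x=x_0\}$ toward the tip and comes back is strictly longer than the detour along the parallel circle $\{x=x_0\}$, so a minimizing sequence between two fixed points of $\Sigma$ can be confined to a fixed compact slab $\{a\le x\le b\}\subset(0,1)$. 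One then extracts a minimizer by the usual Arzel\`a--Ascoli/limit-curve argument on that compact region (equivalently: pass to the metric completion $\overline{\Sigma}$, apply Hopf--Rinow there, and use the inequality above to see the minimizer avoids the two added tip points). Your final paragraph is heading in this direction; you should drop the metric-completeness claim entirely and make the comparison inequality the centerpiece.
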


\begin{proof}
The first part follows directly from Clairaut's integral for the geodesic flow of $(\Sigma,k)$. For the second part note that one has 
$$\dist\nolimits^k(\{x=x_0\},\{x=x_1\})=\left|\int_{x_0}^{x_1} \sqrt{1+(r')^2}\,dx\right|\ge |x_1-x_0|$$
and 
$$L^k(\{x=x_0\})=2\pi r(x_0),$$
where $\dist^k$ denotes the distance and $L^k$ denotes the length relative to $k$. 
For $x_0$ sufficiently close to $0$ or $1$ one has thus  $L^k(\{x=x_0\})<1$, which implies that 
$$\dist\nolimits^k(\{x=x_0\},\{x=0,1\})>L^k(\{x=x_0\}).$$
Therefore no minimal geodesic between two points in $\Sigma$ intersects the singularities $\{x=0\}$ or $\{x=1\}$.
\end{proof}

\begin{cor}\label{cor3}
The spacetime 
$$(M,g)=(\R\times \Sigma, -dt^2+k)$$
is causally simple.
\end{cor}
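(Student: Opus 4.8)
The plan is to establish causal simplicity of $(M,g)=(\R\times\Sigma,-dt^2+k)$ by verifying the two defining properties separately: that $(M,g)$ is causal (in fact strongly causal), and that $J^+_g$ is closed. The product structure with the $-dt^2$ factor makes causality automatic — a closed causal curve would have to have $t$ strictly increasing along it, a contradiction — and in fact the function $t$ is a time function, so $(M,g)$ is stably causal and in particular strongly causal. The real content is the closedness of $J^+_g$, and for this the plan is to exploit Lemma \ref{L10}: every pair of points in $\Sigma$ is joined by a minimal geodesic, and minimal geodesics avoid the singular circles $\{x=0\}$ and $\{x=1\}$.

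First I would give the explicit description of the causal relation in a static product spacetime $-dt^2+k$: for $p=(t_0,\sigma_0)$ and $q=(t_1,\sigma_1)$ one has $(p,q)\in J^+_g$ if and only if $t_1-t_0\ge \dist^k(\sigma_0,\sigma_1)$, and $(p,q)\in I^+_g$ iff the inequality is strict (using here that $(\Sigma,k)$ is a length space with the stated properties). This is standard for standard static spacetimes; the only subtlety is that $(\Sigma,k)$ is incomplete, so one must check that the Riemannian distance still governs the causal relation. This is where Lemma \ref{L10} enters: since every pair of points of $\Sigma$ is joined by a \emph{minimal} geodesic lying entirely in $\Sigma$ (never reaching the singularities), a causal curve realizing $t_1-t_0=\dist^k(\sigma_0,\sigma_1)$ exists as an honest curve in $M$, so $J^+_g(p)$ is exactly $\{(t_1,\sigma_1)\,:\, t_1\ge t_0+\dist^k(\sigma_0,\sigma_1)\}$.

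Given this description, closedness of $J^+_g\subset M\times M$ follows from continuity of $\dist^k\colon\Sigma\times\Sigma\to\R$: if $(p_n,q_n)=((t_0^n,\sigma_0^n),(t_1^n,\sigma_1^n))\to(p,q)$ with $t_1^n-t_0^n\ge\dist^k(\sigma_0^n,\sigma_1^n)$, then passing to the limit gives $t_1-t_0\ge\dist^k(\sigma_0,\sigma_1)$, so $(p,q)\in J^+_g$. The map $\dist^k$ is continuous because $(\Sigma,k)$ is a locally compact length space, and the stated inequality $\dist^k(\{x=x_0\},\{x=x_1\})\ge|x_1-x_0|$ together with $\dist^k(\{x=x_0\},\{x=0,1\})>L^k(\{x=x_0\})$ for $x_0$ near the ends prevents distances from collapsing near the singular circles, so the distance function does not degenerate as one approaches $\partial\Sigma$.

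The main obstacle I anticipate is precisely the incompleteness of $(\Sigma,k)$: in an incomplete Riemannian manifold the Riemannian distance need not control the causal relation of the associated static spacetime, and $J^+_g$ need not be closed (one could a priori have causal curves that ``should'' exist but run off the end of $\Sigma$). Lemma \ref{L10} is tailored to defeat exactly this: the fact that minimal geodesics between points of $\Sigma$ stay uniformly away from $\{x=0,1\}$ means the relevant causal curves are realized inside $M$ and limits of causal curves cannot escape to the singularity. So the proof is really a matter of assembling the static-spacetime dictionary, invoking Lemma \ref{L10} to legitimize it despite incompleteness, and then reading off closedness from continuity of $\dist^k$; I would also remark that strong causality plus closed $J^+_g$ is, by definition, causal simplicity.
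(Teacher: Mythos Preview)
Your proposal is correct and follows essentially the same route as the paper: causality from the temporal function $t$, the equivalence $((\sigma,p),(\tau,q))\in J^+_g \Leftrightarrow \tau-\sigma\ge \dist^k(p,q)$ justified via Lemma~\ref{L10}, and closedness of $J^+_g$ from continuity of $\dist^k$. The paper's write-up is terser, but the ideas and the role of Lemma~\ref{L10} in handling the incompleteness of $(\Sigma,k)$ are identical.
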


\begin{proof}
Since the projection $M\to \R$ onto the first factor is a temporal function, $(M,g)$ is casual. It remains to show that $J^+_g$ is closed: One has 
\begin{equation}\label{E1}
((\sigma,p),(\tau,q))\in J^+_g\Leftrightarrow \tau-\sigma \ge \dist\nolimits^k(p,q)
\end{equation}
by Lemma \ref{L10}. The right-hand-side of \eqref{E1} is a closed condition. The closeness of $J^+_g$ follows directly.
\end{proof}

\begin{prop}\label{P10}
The space of null geodesics $\mathcal{N}_g$ is not Hausdorff.
\end{prop}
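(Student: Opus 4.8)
The plan is to exhibit a single sequence of null geodesics of $(M,g)$ that converges in $\mathcal{N}_g$ to two distinct classes. Everything happens near one singular end of $\Sigma$; after relabelling we may assume $r'(0)>0$ (the degenerate case $r'(0)=0$ is handled in the same way, passing to a subsequence in the angular increment below). Recall that a null geodesic of $(M,g)=(\R\times\Sigma,-dt^2+k)$ is, up to orientation‑preserving affine reparametrisation, of the form $s\mapsto(t_0+s,c(s))$ with $c$ a unit‑speed geodesic of $(\Sigma,k)$, and that $\{[\eta_n]\}$ converges to $[\eta]$ in $\mathcal{N}_g$ exactly when, after suitable affine reparametrisations, $\dot\eta_n(0)\to\dot\eta(0)$ in $TM$ (which forces the base points to converge as well). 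I would start from the geodesics $c_\lambda$ of $(\Sigma,k)$ of small Clairaut constant $\lambda=r^2\dot\phi>0$. Such a $c_\lambda$ is confined to $\{r\ge\lambda\}$; fixing $a\in(0,1)$ so small that $r$ is increasing on $(0,a]$, the geodesic $c_\lambda$ enters $\{x\le a\}$ with $\dot x<0$, turns at the circle $\{x=x_\lambda\}$ where $r(x_\lambda)=\lambda$ (so $x_\lambda\to 0$ as $\lambda\downarrow 0$), and leaves with $\dot x>0$. From $(1+(r')^2)\dot x^2=1-\lambda^2/r^2$ and $\dot\phi=\lambda/r^2$ one computes that the branch of $c_\lambda$ between the incoming and the outgoing crossing of $\{x=a\}$ has $k$‑length and total angular increment
$$\ell_\lambda=2\int_{x_\lambda}^{a}\frac{r\sqrt{1+(r')^2}}{\sqrt{r^2-\lambda^2}}\,dx,\qquad \Delta\phi_\lambda=2\int_{x_\lambda}^{a}\frac{\lambda\sqrt{1+(r')^2}}{r\sqrt{r^2-\lambda^2}}\,dx,$$
and that $\ell_\lambda\to 2L_a$, with $L_a:=\int_0^a\sqrt{1+(r')^2}\,dx$, and $\Delta\phi_\lambda\to\phi_1$ for some finite $\phi_1\in\R$ as $\lambda\downarrow 0$. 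Moreover $d\phi/dx=O(\lambda)$ on any fixed $\{\delta\le x\le a\}$, so the incoming, resp. outgoing, branch of $c_\lambda$ converges there — uniformly, and hence by the geodesic equation in $C^\infty$ — to the meridian arc $\{\phi=0\}$, resp. $\{\phi=\phi_1\}$; recall that meridians are geodesics of a surface of revolution and that these two run between the two singular ends of $\Sigma$.

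Next I would fix the two limiting null geodesics. Let $\eta^0$ be the maximal null geodesic whose $\Sigma$‑projection is the meridian $\{\phi=0\}$ traversed with $x$ decreasing, time‑parametrised so that the projection reaches $x=0$ at time $t_*$; its image then lies in $\{t<t_*\}$ and has the missing pole $\{x=0\}$ as its time‑$t_*$ boundary point. Let $\eta^1$ be the maximal null geodesic whose $\Sigma$‑projection is the meridian $\{\phi=\phi_1\}$ traversed with $x$ increasing, time‑parametrised so that the projection leaves $x=0$ at the same time $t_*$; its image then lies in $\{t>t_*\}$. Since these two images lie in disjoint time‑slabs and their only common limit point is the pole at time $t_*$, which is not a point of $M$, no orientation‑preserving affine reparametrisation carries one onto the other, i.e. $[\eta^0]\ne[\eta^1]$ in $\mathcal{N}_g$.

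Now pick $\lambda_n\downarrow 0$ and let $\eta_n$ be the maximal null geodesic with $\Sigma$‑projection $c_{\lambda_n}$, its time offset fixed so that the incoming crossing of $\{x=a\}$ occurs at time $t_*-L_a$, and rotated in $\phi$ (an isometry of $k$) so that this crossing is the point $X(a,0)$. Then $\eta_n$ and $\eta^0$ pass through the same spacetime point at time $t_*-L_a$, and the $C^\infty$‑convergence of the incoming branch of $c_{\lambda_n}$ to $\{\phi=0\}$ makes their velocities there converge in $TM$; hence $[\eta_n]\to[\eta^0]$. Following the same $\eta_n$ past its turning point, the outgoing crossing of $\{x=a\}$ occurs at time $(t_*-L_a)+\ell_{\lambda_n}\to t_*+L_a$, which is exactly the time at which $\eta^1$ crosses $\{x=a\}$, since the pole‑to‑$\{x=a\}$ piece of $\eta^1$'s projection has $k$‑length $L_a$; together with the $C^\infty$‑convergence of the outgoing branch of $c_{\lambda_n}$ to $\{\phi=\phi_1\}$ this gives, after an affine reparametrisation, $\dot\eta_n\to\dot\eta^1$ in $TM$, so also $[\eta_n]\to[\eta^1]$. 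Thus $\{[\eta_n]\}$ has two distinct limits and $\mathcal{N}_g$ is not Hausdorff.

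The step requiring genuine care is the ODE analysis in the first paragraph — the convergences $\ell_\lambda\to 2L_a$ and $\Delta\phi_\lambda\to\phi_1$ (in particular the finiteness of $\phi_1$), and the $C^\infty$‑convergence of the branches of $c_\lambda$ to meridians away from the turning point. The remaining matching of time coordinates is automatic, because along a null geodesic of $(M,g)$ the elapsed coordinate time equals the $k$‑length of the spatial projection. (Alternatively, the null segments above join two points of a fixed compact set but escape every compact subset of $M$, so $(M,g)$ fails to be null pseudoconvex and $\mathcal{N}_g$ is non‑Hausdorff by \cite{Low90-2}.)
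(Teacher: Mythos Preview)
Your argument is correct and follows the same idea as the paper's proof: take a sequence of $k$--geodesics on $\Sigma$ with Clairaut constant tending to $0$ (equivalently, whose tangents approach meridian tangents), observe that the sequence converges locally to a union of distinct meridians, and lift this to a sequence in $\mathcal{N}_g$ with two limits. The paper dispatches this in a short paragraph without any computation, simply invoking the limit behaviour of surface-of-revolution geodesics; you instead work out the Clairaut integrals for $\ell_\lambda$ and $\Delta\phi_\lambda$ explicitly and pin down the limiting meridians and the time offsets. Both arguments are valid; yours is more self-contained, while the paper's relies on the reader knowing the qualitative picture. One small remark: your disjoint-time-slab argument for $[\eta^0]\neq[\eta^1]$ is robust and does not actually use the finiteness of $\phi_1$, so even in the degenerate case $r'(0)=0$ (where $\Delta\phi_\lambda$ may diverge) passing to a subsequence with $\Delta\phi_{\lambda_n}\bmod 2\pi$ convergent suffices, exactly as you note.
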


\begin{proof}
Up to parametrization every null geodesic $\gamma$ of $(M,g)$ is of the form $t\mapsto (t,\eta(t))$, where $\eta$ is a $k$-arclength geodesic. Choose a sequence 
$\{\eta_n\}_{n\in\N}$ of complete $k$-arclength geodesics whose tangents approach the meridian tangents $\frac{1}{\sqrt{1+(r')^2}}\partial_x$. The sequence $\{\eta_n\}_n$
then converges locally in every $C^l$-topology to a union of meridians of $(\Sigma,k)$. The induced sequence $\{\gamma_n\}_n$ has thus 
several limits in the space of null geodesics, i.e. $\mathcal{N}_g$ is not Hausdorff.
\end{proof}

Theorem \ref{T2} follows from Corollary \ref{cor3} and Proposition \ref{P10} in conjunction with Theorem \ref{cor1}.

\begin{appendix}
\section{The index form of a null geodesic}\label{A1}

%
%
%
%
%

Here we recall the definition and the properties of the  index form of a null geodesic for the convenience of the reader. The material with proofs and 
additional explanations can be found in \cite[Chapter 10]{Beem} and the references therein. Let $(M,g)$ be a spacetime and $\gamma\colon [a,b]\to M$ 
be a null geodesic. Let 
$$\gamma^\perp:=\{v\in \gamma^*TM|\; g(v,\dot\gamma)=0\}$$
denote the orthogonal bundle to $\gamma$. Define a equivalence relation $\sim$ on $\gamma^\perp$ by setting $v\sim w$ if 
$w-v\in \text{span}(\dot\gamma)$. Denote with $\overline{v}$ the equivalence class of $v\in \gamma^\perp$. The quotient bundle 
$$\overline{\gamma^\perp}:=\gamma^\perp/\sim$$ 
is a smooth bundle over $[a,b]$. Since $\gamma$ is null one has $g(v_1,w_1)=g(v_2,w_2)$ for all $v_1,v_2,w_1,w_2\in \gamma^\perp$ with 
$\overline{v_1}=\overline{v_2}$ and $\overline{w_1}=\overline{w_2}$. The same is true for the curvature endomorphism $R(.,\dot\gamma)\dot\gamma$, i.e. 
$$R(v,\dot\gamma)\dot\gamma=R(w,\dot\gamma)\dot\gamma\in \gamma^\perp$$
if $v-w\in \text{span}(\dot\gamma)$. Thus both the metric $g$ and the curvature endomorphism $R(.,\dot\gamma)\dot\gamma$ descend 
to a well defined metric $\overline{g}$ on $\overline{\gamma^\perp}$ with
$$\overline{g}(\overline{v},\overline{w}):=g(v,w)$$
and a well defined endomorphism field $\overline{R}(.,\dot\gamma)\dot\gamma$ on $\overline{\gamma^\perp}$ with
$$\overline{R}(\overline{v},\dot\gamma)\dot\gamma):=\overline{R(v,\dot\gamma)\dot\gamma}.$$
If $X\in \Gamma(\gamma^\perp)$ then the covariant derivative is again a smooth section of $\gamma^\perp$ and if $X-Y\in \text{span}(\dot\gamma)$ 
everywhere, then $\nabla_{\dot\gamma}(X-Y)\in \text{span}(\dot\gamma)$ everywhere as well. Therefore the covariant derivative $\nabla_{\dot\gamma}$ 
descends to a covariant derivative on $\overline{\gamma^\perp}$. Abbreviate the covariant derivative by a prime, i.e. 
$$V':=\overline{\nabla_{\dot\gamma} X},$$
where $V$ denotes the quotient section of $X\in \Gamma(\gamma^\perp)$, i.e. $\overline{X}_t=V_t$ for all $t\in [a,b]$.

Denote with $\mathfrak{X}(\gamma)$ the piecewise smooth sections of $\overline{\gamma^\perp}$ and let 
$$\mathfrak{X}_0(\gamma):=\{V\in \mathfrak{X}(\gamma)|\; V_a=0_a\text{ and }V_b=0_b\},$$
where $0_t$ denotes the zero vector in $\overline{\gamma^\perp}_t$.

\begin{definition}
A smooth section $V\in \mathfrak{X}(\gamma)$ is said to be a {\it Jacobi class} in $\gamma^\perp$ if $V$ satisfies the Jacobi equation 
$$V''+\overline{R}(V,\dot\gamma)\dot\gamma=0.$$
\end{definition}

\begin{lemma}
Let $W$ be a Jacobi class in $\mathfrak{X}(\gamma)$. Then there exists a Jacobi field $Y\in \Gamma(\gamma^\perp)$ with $\overline{Y_t}=W_t$
for all $t\in [a,b]$. Conversely if $Y$ is a Jacobi field in $\gamma^\perp$, then $t\mapsto W_t:=\overline{Y_t}$ is a Jacobi class in $\overline{\gamma^\perp}$.
\end{lemma}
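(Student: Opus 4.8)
The plan is to first dispose of the converse implication, which is an immediate projection of the Jacobi equation, and then to obtain the lift in the forward direction by correcting an arbitrary smooth lift by a suitable multiple of $\dot\gamma$.

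For the converse, let $Y\in\Gamma(\gamma^\perp)$ be a Jacobi field, so $\nabla_{\dot\gamma}\nabla_{\dot\gamma}Y+R(Y,\dot\gamma)\dot\gamma=0$, and set $W_t:=\overline{Y_t}$. Since $\nabla_{\dot\gamma}Y\in\Gamma(\gamma^\perp)$ is a lift of $W'$, a second application of the quotient covariant derivative gives $W''=\overline{\nabla_{\dot\gamma}\nabla_{\dot\gamma}Y}$; together with $\overline{R}(W,\dot\gamma)\dot\gamma=\overline{R(Y,\dot\gamma)\dot\gamma}$ from the construction of $\overline{\gamma^\perp}$, projecting the Jacobi equation to $\overline{\gamma^\perp}$ yields $W''+\overline{R}(W,\dot\gamma)\dot\gamma=0$, i.e. $W$ is a Jacobi class.

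For the forward direction, given a Jacobi class $W$, I first choose any smooth $X\in\Gamma(\gamma^\perp)$ with $\overline{X_t}=W_t$; such a lift exists because $\gamma^\perp\to\overline{\gamma^\perp}$ is a surjective morphism of smooth vector bundles over $[a,b]$ and hence admits a smooth splitting. As above, $\overline{\nabla_{\dot\gamma}\nabla_{\dot\gamma}X+R(X,\dot\gamma)\dot\gamma}=W''+\overline{R}(W,\dot\gamma)\dot\gamma=0$, so that
$$\nabla_{\dot\gamma}\nabla_{\dot\gamma}X+R(X,\dot\gamma)\dot\gamma=f\,\dot\gamma$$
for a unique smooth $f\colon[a,b]\to\R$. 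Choosing $u\colon[a,b]\to\R$ with $u''=-f$ (by integrating twice) and putting $Y:=X+u\,\dot\gamma$, the fact that $\dot\gamma$ is parallel gives $\nabla_{\dot\gamma}\nabla_{\dot\gamma}(u\dot\gamma)=u''\dot\gamma$, while $R(u\dot\gamma,\dot\gamma)\dot\gamma=0$ by skew-symmetry of the curvature tensor; hence $\nabla_{\dot\gamma}\nabla_{\dot\gamma}Y+R(Y,\dot\gamma)\dot\gamma=(f+u'')\dot\gamma=0$. Thus $Y$ is a Jacobi field, it lies in $\gamma^\perp$ because $g(\dot\gamma,\dot\gamma)=0$, and $\overline{Y_t}=\overline{X_t}=W_t$ since $Y-X\in\mathrm{span}(\dot\gamma)$.

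No step presents a genuine difficulty: the only thing to check carefully is that the defect of $X$ in the Jacobi equation is exactly a section of $\mathrm{span}(\dot\gamma)$---forced by the vanishing of its projection---and that a correction $u\dot\gamma$ alters the left-hand side of the Jacobi equation by precisely $u''\dot\gamma$, so that the scalar ODE $u''=-f$ removes the obstruction. Both facts use only that $\dot\gamma$ is null and parallel along the geodesic $\gamma$, as has already been exploited throughout the appendix.
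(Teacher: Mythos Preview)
Your argument is correct. Note, however, that the paper does not actually supply a proof of this lemma: the appendix explicitly states that ``the material with proofs and additional explanations can be found in \cite[Chapter 10]{Beem},'' and the lemma is merely quoted from that reference. Your proof is the standard one given there: project the Jacobi equation for the converse, and for the forward direction lift $W$ arbitrarily to $X\in\Gamma(\gamma^\perp)$, observe that the Jacobi defect of $X$ lies in $\mathrm{span}(\dot\gamma)$, and kill it by adding $u\dot\gamma$ with $u''=-f$. There is nothing to add or correct.
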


\begin{lemma}
Let $W\in \mathfrak{X}(\gamma)$ be a Jacobi class with $W_a=0_a$ and $W_b=0_b$. Then there is a unique 
Jacobi field $Z\in \Gamma(\gamma^\perp)$ with $\overline{Z_t}=W_t$ for all $t\in [a,b]$ that vanishes at $a$ and $b$.
\end{lemma}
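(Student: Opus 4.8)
The plan is to lift $W$ to a genuine Jacobi field along $\gamma$ using the preceding lemma, and then to correct that lift by an affine-in-$t$ multiple of $\dot\gamma$ so as to annihilate its two boundary values. First I would note that, since $\gamma$ is a geodesic, $\nabla_{\dot\gamma}\dot\gamma=0$ and $R(\dot\gamma,\dot\gamma)\dot\gamma=0$, so for any $\alpha,\beta\in\R$ one has $\nabla_{\dot\gamma}\nabla_{\dot\gamma}\bigl((\alpha+\beta t)\dot\gamma\bigr)=0$ and $R\bigl((\alpha+\beta t)\dot\gamma,\dot\gamma\bigr)\dot\gamma=0$; moreover $g(\dot\gamma,\dot\gamma)=0$ because $\gamma$ is null, so $(\alpha+\beta t)\dot\gamma\in\Gamma(\gamma^\perp)$. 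Hence, taking the Jacobi field $Y\in\Gamma(\gamma^\perp)$ with $\overline{Y_t}=W_t$ provided by the previous lemma, the section $Z_{\alpha,\beta}:=Y+(\alpha+\beta t)\dot\gamma$ is again a Jacobi field in $\gamma^\perp$ and still satisfies $\overline{(Z_{\alpha,\beta})_t}=W_t$ for all $t\in[a,b]$.

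Next I would choose $\alpha,\beta$ from the boundary data. The hypotheses $W_a=0_a$ and $W_b=0_b$ mean precisely that $Y_a=c_a\dot\gamma(a)$ and $Y_b=c_b\dot\gamma(b)$ for some $c_a,c_b\in\R$, so $(Z_{\alpha,\beta})_a=(c_a+\alpha+\beta a)\dot\gamma(a)$ and $(Z_{\alpha,\beta})_b=(c_b+\alpha+\beta b)\dot\gamma(b)$. Thus $Z_{\alpha,\beta}$ vanishes at both $a$ and $b$ exactly when $\alpha+\beta a=-c_a$ and $\alpha+\beta b=-c_b$, a linear system in $(\alpha,\beta)$ with a unique solution because $a\neq b$; I set $Z:=Z_{\alpha,\beta}$ for that solution, which then has all the required properties.

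For the uniqueness statement, suppose $Z_1,Z_2\in\Gamma(\gamma^\perp)$ are Jacobi fields with $\overline{(Z_i)_t}=W_t$ for all $t$ and $(Z_i)_a=(Z_i)_b=0$. Then $D:=Z_1-Z_2$ is a Jacobi field with $D_t\in\text{span}(\dot\gamma(t))$ for all $t$, so $D=f\dot\gamma$ for a smooth $f\colon[a,b]\to\R$; the Jacobi equation together with $\nabla_{\dot\gamma}\dot\gamma=0$ forces $f''\equiv 0$, so $f$ is affine, and $f(a)=f(b)=0$ with $a\neq b$ gives $f\equiv 0$, i.e. $Z_1=Z_2$. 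I do not expect a genuine obstacle here: this is essentially two lines of linear algebra, and the only point needing a little care is the verification in the first paragraph that adding $(\alpha+\beta t)\dot\gamma$ preserves being a Jacobi field lying in $\gamma^\perp$.
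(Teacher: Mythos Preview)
Your argument is correct, and indeed this is exactly the standard proof one finds in \cite[Chapter~10]{Beem}. Note, however, that the paper itself does not supply a proof of this lemma: the appendix merely recalls the statement and refers the reader to \cite{Beem} for proofs, so there is no ``paper's own proof'' to compare against beyond observing that your argument matches the cited reference.
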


\begin{definition}
For $s\neq t\in [a,b]$, $s$ and $t$ are said to be {\it conjugated along $\gamma$} if there exists a Jacobi class $W\neq 0$ in 
$\mathfrak{X}(\gamma)$ with $W_{s}=0_s$ and $W_{t}=0_t$. Also $t\in (a,b]$ is said to be a {\it conjugate point of $\gamma$}
if $s=a$ and $t$ are conjugate along $\gamma$. 
\end{definition}

\begin{definition}
The {\it index form} $\overline{I}\colon \mathfrak{X}(\gamma)\times \mathfrak{X}(\gamma)\to \R$ is given by
$$\overline{I}(V,W)=-\int_a^b   [\overline{g}(V',W')-\overline{g}(\overline{R}(V,\dot\gamma)\dot\gamma,W)]dt.$$
\end{definition}

\begin{theorem}
Let $\gamma\colon [a,b]\to M$ be a null geodesic segment. Then the following are equivalent:
\begin{itemize}
\item[(a)] The segment $\gamma$ has no conjugate points to $s=a$ in $(a,b]$.
\item[(b)] $\overline{I}(W,W)<0$ for all $W\in \mathfrak{X}_0(\gamma)$, $W\neq 0$.
\end{itemize}
\end{theorem}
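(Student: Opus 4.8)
This is the Morse index theorem for null geodesics (compare \cite[Chapter 10]{Beem}); the plan is to reduce it to a computation with ordinary differential equations in a parallel frame. First I would fix a parallel orthonormal frame of $\overline{\gamma^\perp}$ along $\gamma$, identifying a piecewise smooth class $V\in\mathfrak{X}(\gamma)$ with a piecewise smooth $\R^k$-valued function ($k:=\mathrm{rank}\,\overline{\gamma^\perp}$), the covariant derivative $V'$ with the ordinary derivative, the metric $\overline{g}$ with the standard positive definite inner product $\langle\cdot,\cdot\rangle$ on $\R^k$, and $\overline{R}(\cdot,\dot\gamma)\dot\gamma$ with multiplication by a smooth symmetric matrix $\mathcal{R}(t)$; then the Jacobi equation reads $V''+\mathcal{R}V=0$ and $\overline{I}(V,W)=-\int_a^b(\langle V',W'\rangle-\langle\mathcal{R}V,W\rangle)\,dt$. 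Note that positive definiteness of $\overline{g}$ is what makes the signs come out this way: the leading term $-\int_a^b\langle V',V'\rangle\,dt$ is $\le 0$, so for null geodesics it is negative (not positive) definiteness that is the generic behaviour.

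For the implication (b)$\Rightarrow$(a) I would argue by contraposition. If some $c\in(a,b]$ is conjugate to $a$ along $\gamma$, choose a Jacobi class $J\neq 0$ with $J_a=0_a$ and $J_c=0_c$. If $c=b$, then $J\in\mathfrak{X}_0(\gamma)$, and integrating by parts and using $J''+\mathcal{R}J=0$ gives
\[
\overline{I}(J,J)=-\big[\langle J',J\rangle\big]_a^b+\int_a^b\langle J''+\mathcal{R}J,\,J\rangle\,dt=0 .
\]
If $c<b$, extend $J$ by the zero section on $[c,b]$; the resulting broken field $\tilde W$ lies in $\mathfrak{X}_0(\gamma)$, is not identically zero, and by additivity of $\overline{I}$ over $[a,c]\cup[c,b]$ together with the same computation on $[a,c]$ one gets $\overline{I}(\tilde W,\tilde W)=0$. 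In either case $\overline{I}$ is not negative definite on the nonzero elements of $\mathfrak{X}_0(\gamma)$, contradicting (b).

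For (a)$\Rightarrow$(b) I would use the classical index reduction. Let $A(t)$ be the matrix solution of $A''+\mathcal{R}A=0$ with $A(a)=0$ and $A'(a)=\mathrm{Id}$; the hypothesis that no point of $(a,b]$ is conjugate to $a$ says exactly that $A(t)$ is invertible for every $t\in(a,b]$, and one checks as usual that the bilinear form $(v,w)\mapsto\langle A'(t)v,\,A(t)w\rangle$ on $\R^k$ is symmetric for every $t$ (Wronskian identity, using the symmetry of $\mathcal{R}$ and $A(a)=0$). Given $0\neq W\in\mathfrak{X}_0(\gamma)$, write $W=Af$ with $f:=A^{-1}W$ on $(a,b]$; from $W(b)=0$ one gets $f(b)=0$, and from $W(a)=0$ together with $A(t)=(t-a)\mathrm{Id}+O((t-a)^2)$ the function $f$ stays bounded near $a$ while $Af'=W'-A'A^{-1}W$ stays integrable near $a$. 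A direct computation using the Jacobi equation and the symmetry above gives the pointwise identity $\langle W',W'\rangle-\langle\mathcal{R}W,W\rangle=\tfrac{d}{dt}\langle A'f,\,Af\rangle+\langle Af',Af'\rangle$, so integrating over $[a+\varepsilon,b]$ and letting $\varepsilon\to 0$,
\[
\overline{I}(W,W)=-\big[\langle A'f,\,W\rangle\big]_a^b-\int_a^b\langle Af',Af'\rangle\,dt=-\int_a^b\langle Af',Af'\rangle\,dt ,
\]
since the boundary term vanishes at $b$ (because $W(b)=0$) and at $a$ (because $A(a)=0$ and $f$ is bounded). As $\langle\cdot,\cdot\rangle$ is positive definite this gives $\overline{I}(W,W)\le 0$, with equality only if $Af'\equiv 0$, hence $f'\equiv 0$ by invertibility of $A$ on $(a,b]$, hence $f\equiv 0$ since $f(b)=0$, hence $W\equiv 0$, contradicting $W\neq 0$. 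Thus $\overline{I}(W,W)<0$.

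The step I expect to require the most care is the passage to the limit at the singular endpoint $t=a$ in the (a)$\Rightarrow$(b) direction: both $A$ and $W$ degenerate there, and one must ensure that $f=A^{-1}W$ is regular enough for the integration by parts and that the boundary contribution at $a$ genuinely vanishes. This is handled by the device already indicated — integrating over $[a+\varepsilon,b]$ and using the first-order expansions $W(t)=W'(a)(t-a)+o(t-a)$ and $A(t)=(t-a)\mathrm{Id}+o(t-a)$ to bound $f$ and $Af'$ uniformly as $\varepsilon\downarrow 0$.
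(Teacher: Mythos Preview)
The paper does not actually prove this theorem; the appendix explicitly states that ``the material with proofs and additional explanations can be found in \cite[Chapter 10]{Beem}'' and simply records the statement for later use. Your argument is the standard proof one finds in that reference: reduce to a positive-definite ODE problem via a parallel frame of $\overline{\gamma^\perp}$, handle (b)$\Rightarrow$(a) by producing a broken Jacobi class with $\overline{I}=0$, and handle (a)$\Rightarrow$(b) by the substitution $W=Af$ with $A$ the fundamental matrix of Jacobi solutions vanishing at $a$, together with the Wronskian symmetry $(A')^{T}A=A^{T}A'$. The computation and the care you take with the singular endpoint $t=a$ are correct, so there is nothing to compare beyond noting that you have supplied what the paper only cites.
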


\end{appendix}


\end{document}